\documentclass[pdflatex,sn-mathphys-num]{sn-jnl}


\usepackage{graphicx}%
\usepackage{multirow}%
\usepackage{amsmath,amssymb,amsfonts}%
\usepackage{amsthm}%
\usepackage{mathrsfs}%
\usepackage[title]{appendix}%
\usepackage{xcolor}%
\usepackage{textcomp}%
\usepackage{manyfoot}%
\usepackage{booktabs}%
\usepackage{algorithm}%
\usepackage{algorithmicx}%
\usepackage{algpseudocode}%
\usepackage{listings}%




  \theoremstyle{plain}
  \newtheorem{thm}{Theorem}[section]
  \theoremstyle{plain}
  
  \theoremstyle{plain}
  \newtheorem{prop}[thm]{Proposition}
  \theoremstyle{remark}
  \newtheorem{rem}[thm]{Remark}
  \theoremstyle{remark}
  
  \theoremstyle{plain}
  \newtheorem{lem}[thm]{Lemma}
  
    \newtheorem{ex}[thm]{Example}


\smallskip
\def\<{{\langle }}
\def\>{{\rangle }}

\smallskip
\def\<{{\langle }}
\def\>{{\rangle }}

\def\bar#1{\overline{#1}}

\theoremstyle{thmstyleone}%
%

\theoremstyle{thmstyletwo}%

\theoremstyle{thmstylethree}%

\raggedbottom

\begin{document}

\title[Article Title]{Spectrum of the Laplacian and the Jacobi operator on generalized rotational minimal hypersurfaces of spheres}


\author{\fnm{Oscar} \sur{Perdomo}}\email{perdomoosm@ccsu.edu}

\affil{\orgdiv{Department of Mathematics}, \orgname{Central Connecticut State University}, \orgaddress{\street{1650 Stanley St.}, \city{New Britain, CT}, \state{CT}, \country{USA}}}


\abstract{  Let $M\subset S^{n+1}$ be the hypersurface generated by rotating a hypersurface $M_0$ contained in the interior of the unit ball of $\mathbb{R}^{n-k+1}$. More precisely, $M=\{(\sqrt{1-|m|^2}\, y, m):y\in S^k, m\in M_0\}$. We derive the equation for the mean curvature of $M$ in terms of the principal curvatures of $M_0$. For the particular case when $M_0$ is a surface of revolution in $\mathbb{R}^3$, we provide a method for finding the eigenvalues of the Laplace and stability operators. To illustrate this method, we consider an example of a minimal embedded hypersurface in $S^6$ and numerically compute all the eigenvalues of the Laplace operator less than 12, as well as all non-positive eigenvalues of the stability operators. For this example, we show that the stability index (the number of negative eigenvalues of the stability operator, counted with multiplicity) is 77, and the nullity (the multiplicity of the eigenvalue $\lambda=0$ of the stability operator) is 14. Similar results are found in the case where $M_0$ is a hypersurface in $\mathbb{R}^{l+2}$ of the form $(f_2(u)z, f_1(u))$, with $z$ in the $l$-dimensional unit sphere $S^l$. Carlotto and Schulz have found examples of embedded minimal hypersurfaces in the case where $M_0=S^k\times S^1$.}


\keywords{Minimal hypersurfaces, spheres, Eigenvalues of the Laplacian, Stability index, nullity.}



\maketitle
\section{Introduction}

Minimal hypersurfaces in $S^{n+1}$ are critical points of the $n$-dimensional area functional. If $M\subset S^{n+1}$ is an oriented compact  minimal hypersurface and $\nu:M\longrightarrow S^{n+1}$ is a Gauss map, then for any  function $f:M\longrightarrow \mathbb{R}$, when we consider the 1-parametric family of hypersurface $M_t=\{\cos(t f(m)) m+\sin(t f(m))\nu(m): m\in M\}\subset S^{n+1}$, the function  $h(t)=n-area(M_t)$ satisfies:

$$h(0)=n-\hbox{area}(M), \quad h^\prime(0)=\int_M Hf \quad\hbox{and}\quad h^{\prime\prime}(0)=\int_MJ(f)f$$

where $H:M\longrightarrow \mathbb{R}$ is the mean curvature and $J(f)=-\Delta f-|A|^2f-nf$ is the Jacobi or stability operator. Here $|A|^2=\lambda_1^2+\dots +\lambda_n^2$ is the square of the norm of the shape operator and $\lambda_i$ are the principal curvatures of $M$. 

These are the some known examples of minimal hypersurfaces of spheres:
\begin{itemize}
 \item
 The totally umbilical example: $S^n=\{x\in S^{n+1}:x_{n+2}=0\}$.
 
 \item
  The minimal Clifford hypersurfaces $M=\{(y,z):y\in\mathbb{R}^{k+1}, \, z\in\mathbb{R}^{n-k+1},\, |y|^2=\frac{k}{n},\, |z|^2=\frac{n-k}{n} \}$.
 \item
 
 The isoparametric minimal hypersurfaces: These examples can be considered as generalizations of the Clifford hypersurfaces, and they are characterized by having constant principal curvatures, see  \cite{M1} and \cite{M2}.

 \item
 
Rotational minimal hypersurfaces: Another family that can be considered a generalization of the Clifford hypersurface. They are characterized by the property that at every point, there are exactly two principal curvatures (this time, not necessarily constant). It is known that the only embedded examples are the Clifford hypersurfaces; see \cite{O1} and \cite{O2}.
 
 \item
Homogeneous hypersurfaces with low cohomogeneity: For these examples, the group of isometries is big enough so that the hypersurface can be described by a curve. In some particular cases, the ordinary differential equations (ODEs) for these curves were solved, and therefore explicit examples were presented, see \cite{HL}.
 
  \item
 Lawson examples: Lawson provided several families of minimal surfaces in $S^3$, \cite{L}. 
 
\item

Karcher-Pinkall-Sterling family of surfaces in $S^3$, see \cite{KPS}. 

\item

Kapouleas examples. Several families of minimal surfaces in $S^3$, see \cite{K1}, \cite{KW}, \cite{KY}, \cite{W}. 

\item

Carlotto-Schulz examples. They considered the examples that we are considering in this paper, hypersurfaces of the form $M=\{(\sqrt{1-|m|^2}\, y, m):y\in S^k, m\in M_0\}$, for the particular case that $M_0=S^k\times S^1$ and showed the existence of minimal embedded examples in the  unit dimensional sphere $S^{2k+1}$. 

 \end{itemize}

There are very interesting questions regarding the spectrum of the Laplace operator and the stability operator. The only minimal hypersurfaces for which the complete spectrum of the Laplacian is known are the totally geodesic spheres, the Clifford hypersurfaces, and the cubic isoparametric hypersurfaces,  \cite{S1}, \cite{S2}.  In \cite{P4}, the author found a way to numerically compute the spectra of the Laplacian and the stability operator for the rotational hypersurfaces. For general minimal hypersurfaces in the sphere, regarding the spectrum of the Laplacian, clearly $0$ is an eigenvalue, and it is known that if $M$ is embedded and compact with no boundary, then the first eigenvalue of the Laplacian, $\lambda_1(M)$, is greater than $n/2$; see \cite{CW}.  Recently, this bound on $\lambda_1(M)$ was improved in \cite{DSS}. It is known that $n$, the dimension of $M$, is an eigenvalue of the Laplace operator.  Yau's conjecture, \cite{Y}, states that if $M$ is compact and embedded, then there are no eigenvalues between $0$ and $n$, this is, the conjecture states that $n$ is  the first positive eigenvalue of $-\Delta$. 

This conjecture has been verified for the Clifford hypersurfaces; the cubic isoparametric; degree 4 isoparametric hypersurfaces, \cite{ZW}; several of the homogeneous hypersurfaces with low cohomogeneity, \cite{MOU}; the Lawson minimal surfaces and the Karcher-Pinkall-Sterling minimal surfaces in $S^3$, \cite{CS}; and, more recently, for a broader class of minimal surfaces in $S^3$ with reflectional symmetries by Kusner and McGrath, \cite{KusMcG}.

The stability index of $M$ is defined as the number of negative eigenvalues of $J$ counted with multiplicities. For the totally geodesic spheres, the stability index is $1$, and for the Clifford minimal hypersurfaces, the stability index is $n+3$. It has been conjectured that any other compact minimal hypersurface has a stability index greater than $n+3$. The conjecture has been proven only for surfaces, the case $n=2$, by Urbano, \cite{U}. In the case that $M$ has antipodal symmetry the conjecture was shown in \cite{P1}.   It is known that $0$ is an eigenvalue of the stability operator and its multiplicity is called its nullity. For one of the families of minimal surfaces introduced by Lawson, those denoted as $\xi_{g,1}$, Kapouleas and Wiygul showed that their stability index is $2g+3$ and the nullity is $6$.


In this paper, we will consider hypersurfaces of  $S^{n+1}$ that can be written in the form $(\sqrt{1-|m|^2}\, y, m)$, where the points $m$ are in a hypersurface $M_0$ of $\mathbb{R}^{n-k+1}$ and $y\in S^k$. We find the mean curvature of $M$ in terms of the principal curvatures of $M_0$. When $M_0$ is a hypersurface of revolution of the form $(f_2(u)z,f_1(u))$ with $z\in S^l$, we  compute the Laplace and the stability operator in terms of the two real functions $f_1(u)$ and $f_2(u)$ and we describe a method for finding  the spectrum of these two operators. Several numerical examples of embedded minimal hypersurfaces are found.  


\section{Computing the mean curvature}

Let $M_0\subset \mathbb{R}^{n-k+1}$ be a hypersurface satisfying  $|m|<1$ for all $m\in M_0$. Let us denote by $S^k$ the unit sphere  in $\mathbb{R}^{k+1}$. Let us consider the immersion $\varphi:S^k\times M_0\longrightarrow M\subset S^{n+1}\subset\mathbb{R}^{n+2}$ given by 

\begin{eqnarray}\label{theexamples}
\varphi(y,m)=(\sqrt{1-|m|^2}\, y, m). 
\end{eqnarray}

Let us assume that $M_0$ is orientable and that  $N:M_0\longrightarrow \mathbb{R}^{n-k+1}$ is a  Gauss map. A direct computation shows that a Gauss map (defined on $S^k\times M_0$, not on $M=\varphi(S^k\times M_0)$) for the immersion $\varphi$ is given by 

$$\xi(y,m)=\frac{1}{\sqrt{1-(N(m)\cdot m)^2}}\left( -(N(m)\cdot m) \sqrt{1-|m|^2}\, y\, ,\, -(N(m)\cdot m)\, m+N(m) \right). $$

Let us denote by $\kappa_i$ the principal curvatures of  $M_0$. More precisely, let us assume that $dN_m(v_i)=-\kappa_i v_i$ where $\{v_1,\dots, v_{n-k}\}$ forms an orthonormal basis of $T_mM_0$. If $w$ is a unit vector in $T_yS^k$ and $p=(y,m)$, then 

$$d\varphi_p(w)=(\sqrt{1-|m|^2}\, w, 0,\dots,0)$$

and 

$$d\xi_p(w)=\frac{ -N(m)\cdot m}{\sqrt{1-(N(m)\cdot m)^2}}\left( \sqrt{1-|m|^2}\, w\, , 0\dots 0\right).$$

Therefore 

\begin{eqnarray}\label{lambda0}
d\xi_p(w)=-\lambda_0 d\varphi_p(w)\, \quad \hbox{with} \quad \lambda_0=\frac{ N(m)\cdot m}{\sqrt{1-(N(m)\cdot m)^2}} .
\end{eqnarray}

We have that $\lambda_0$ is a principal curvature of $\varphi$ with multiplicity at least $k$. Before we compute the other $n-k$ principal curvatures of the immersion $\varphi$ let us consider the following lemma,

\begin{lem}\label{fgh} If $f=\sqrt{1-|m|^2}$, $g=N(m)\cdot m$ and $h=\sqrt{1-(N(m)\cdot m)^2}$ and $dN(v_i)=-\kappa_iv_i$, then

$$dg_m(v_i)=-\kappa_i v_i\cdot m,\quad df_m(v_i)=-\frac{v_i\cdot m}{f}\quad \hbox{and}\quad
dh_m(v_i)=\frac{g\kappa_i (v_i\cdot m)}{h} .$$

Moreover,

$$v_i(h^{-1})=-\frac{g(v_i\cdot m)\kappa_i}{h^3},\, \quad v_i(gh^{-1})=- \frac{\kappa_i(v_i\cdot m)}{h^3} \quad\hbox{and}
\quad v_i(fgh^{-1})=-\frac{v_i\cdot m}{h^3f}(gh^2+\kappa_i f^2) .$$
\end{lem}

\begin{proof} Since $N(m)\cdot v_i=0$, then $dg_m(v_i)=dN_m(v_i)\cdot m+N(m)\cdot v_i=-\kappa_i\, v_i\cdot m$. The other identities are similar. Let us check the last identity, 

\begin{eqnarray*}
v_i(fgh^{-1})&=& v_i(f)\frac{g}{h}+fv_i(gh^{-1})\\
  &=& -\frac{g(v_i\cdot m)}{fh}-\frac{f\kappa_i(v_i\cdot m)}{h^3} \\
  &=& -\frac{v_i\cdot m}{fh^3}\left(gh^2+\kappa_if^2 \right) .
  \end{eqnarray*}
\end{proof}

\begin{prop} \label{nH} Let $f,g$ and $h$ be the functions defined in Lemma \ref{fgh}. We have that

$$d\varphi(v_i)=\left(-\frac{v_i\cdot m}{f}\, y\, , \, v_i \right).$$

Moreover, if $H$ is the mean  curvature of $M$ and $H_0$ is the mean curvature of $M_0$ then, 

\begin{eqnarray}\label{eqH}
nH=\frac{ng+(n-k)H_0}{h}-\frac{1}{h^3}\sum_{j=1}^{n-k } \kappa_i(v_i\cdot m)^2 .
\end{eqnarray}

\end{prop}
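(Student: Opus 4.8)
The formula $d\varphi_p(v_i)=\left(-\frac{v_i\cdot m}{f}\,y,\,v_i\right)$ is immediate: writing $\varphi(y,m)=(fy,m)$ and differentiating with $y$ held fixed while $m$ moves in the direction $v_i$ gives $d\varphi_p(v_i)=(df_m(v_i)\,y,\,v_i)$, and substituting $df_m(v_i)=-(v_i\cdot m)/f$ from Lemma~\ref{fgh} yields the claim. For the mean curvature I plan to use the trace formula $nH=-\sum_{\alpha,\beta}g^{\alpha\beta}\langle d\xi_p(e_\alpha),d\varphi_p(e_\beta)\rangle$, where $g_{\alpha\beta}=\langle d\varphi_p(e_\alpha),d\varphi_p(e_\beta)\rangle$ is the first fundamental form in the combined basis consisting of a basis of $T_yS^k$ together with $\{v_1,\dots,v_{n-k}\}$. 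The first point is that the two blocks decouple: for $w\in T_yS^k$ one has $w\cdot y=0$, so $\langle d\varphi_p(w),d\varphi_p(v_i)\rangle=-(v_i\cdot m)(w\cdot y)=0$, and since $d\xi_p(w)=-\lambda_0\,d\varphi_p(w)$ by~(\ref{lambda0}), the mixed second-fundamental-form entries vanish as well. Hence the $S^k$ directions contribute exactly $k\lambda_0=kg/h$ to $nH$, and it remains to compute the trace over the $M_0$ block.

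Restricting to $\{v_i\}$ and writing $a_i=v_i\cdot m$, the induced metric is $g_{ij}=\delta_{ij}+\frac{a_ia_j}{f^2}$, a rank-one perturbation of the identity. The geometric identity I would isolate here is that $\sum_i a_i^2$ is the squared length of the tangential part of $m$, namely $|m|^2-(N\cdot m)^2=(1-f^2)-g^2=h^2-f^2$; in particular $f^2+\sum_i a_i^2=h^2$. The Sherman--Morrison formula then gives the clean inverse $g^{ij}=\delta_{ij}-\frac{a_ia_j}{h^2}$.

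Next I would compute $d\xi_p(v_i)$ from $\xi=(-fgh^{-1}y,\,-gh^{-1}m+h^{-1}N)$ by differentiating each scalar coefficient with the product rules recorded in Lemma~\ref{fgh}, using $dN_m(v_i)=-\kappa_i v_i$ and $N\cdot v_j=0$. Pairing with $d\varphi(v_j)$ and simplifying the coefficient of $a_ia_j$, I expect the $\kappa_i$ cross terms to cancel and to obtain $\mathrm{II}_{ij}=-\langle d\xi(v_i),d\varphi(v_j)\rangle=\frac{g}{hf^2}a_ia_j+\frac{g+\kappa_i}{h}\delta_{ij}$. Forming $\sum_{i,j}g^{ij}\mathrm{II}_{ij}$ then splits into four sums, each evaluated via $\sum_i a_i^2=h^2-f^2$; the single term $-\frac{1}{h^3}\sum_i\kappa_i a_i^2$ survives untouched and matches the last term of~(\ref{eqH}), while the remaining $\kappa_i$-free pieces are all proportional to $g$.

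The main obstacle is the algebraic collapse of those $g$-terms. After putting them over the common denominator $h^3f^2$, the numerator is $(h^2-f^2)h^2+(n-k)h^2f^2-(h^2-f^2)^2-(h^2-f^2)f^2$; setting $u=h^2-f^2$ this becomes $u\,h^2-u\,f^2-u^2+(n-k)h^2f^2=u^2-u^2+(n-k)h^2f^2$, so the whole coefficient reduces to exactly $(n-k)/h$. Thus the $M_0$ block contributes $\frac{(n-k)g+(n-k)H_0}{h}-\frac{1}{h^3}\sum_i\kappa_i(v_i\cdot m)^2$, and adding the $S^k$ contribution $kg/h$ combines $kg+(n-k)g=ng$ to give~(\ref{eqH}). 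The delicate part is trusting the two cancellations (the $\kappa_i$ cross terms in $\mathrm{II}_{ij}$ and the $u^2$ cancellation above) and using the identity $\sum_i(v_i\cdot m)^2=h^2-f^2$ consistently; everything else is bookkeeping.
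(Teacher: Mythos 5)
Your argument is correct, and it reaches the formula by a genuinely different route in the step where the trace is extracted. The paper also computes $-d\xi_p(v_i)$ from Lemma~\ref{fgh}, but then expands it directly in the (non-orthonormal) basis $\{d\varphi_p(v_j)\}$ by matching the $M_0$-components — the coefficients are the $c_{ij}=-\frac{1}{h^3}\kappa_i(v_i\cdot m)(v_j\cdot m)+\frac{g+\kappa_i}{h}\delta_{ij}$ — and takes the basis-independent trace $k\lambda_0+\sum_i c_{ii}$, so it never needs the first fundamental form, its inverse, or the identity $\sum_i(v_i\cdot m)^2=h^2-f^2$. You instead contract $g^{ij}\mathrm{II}_{ij}$, which forces you to invert the rank-one-perturbed metric and to carry out the final collapse of the $g$-terms. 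I checked the two cancellations you flag as delicate and both hold: pairing the first components of $-d\xi(v_i)$ and $d\varphi(v_j)$ contributes $\frac{g a_ia_j}{hf^2}+\frac{\kappa_i a_ia_j}{h^3}$, whose $\kappa_i$ part exactly cancels the $-\frac{\kappa_i a_ia_j}{h^3}$ in $c_{ij}$, giving your symmetric $\mathrm{II}_{ij}$; and with $u=h^2-f^2$ the $g$-coefficient numerator is indeed $uh^2-uf^2-u^2+(n-k)h^2f^2=(n-k)h^2f^2$. Your identity $\sum_i(v_i\cdot m)^2=|m|^2-g^2=h^2-f^2$ is also right. What your route buys is a built-in consistency check (the first components of $d\xi(v_i)$ actually enter the computation, whereas the paper's matching argument discards them after asserting tangency); what it costs is the Sherman--Morrison step and the extra algebra, which the paper's trace-of-$[c_{ij}]$ shortcut avoids entirely.
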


\begin{proof}
We have that $-\xi=\left(gfh^{-1} \, y\, , \, gh^{-1}m-h^{-1} N\right)$. Using Lemma \ref{fgh}, we get that 

\begin{eqnarray*} 
-d\xi_p(v_i)&=& \left( -\frac{v_i\cdot m}{h^3f}(gh^2+\kappa_i f^2) \, y\, ,\, - \frac{\kappa_i(v_i\cdot m)}{h^3} \, m+gh^{-1}v_i +\frac{g(v_i\cdot m)\kappa_i}{h^3}\, N+h^{-1}\kappa_i v_i\right)\\
&=& \left( -\frac{v_i\cdot m}{h^3f}(gh^2+\kappa_i f^2) \, y\, ,\,gh^{-1}v_i +\frac{(v_i\cdot m)\kappa_i}{h^3}\, u+h^{-1}\kappa_i v_i\right)\\
\end{eqnarray*}

where $u=gN-m$. Since $u \cdot N=0$ then the vector $u\in T_mM_0$ and 

$$u=\sum_{j=1}^{n-k}(u\cdot v_j)\, v_j =-\sum_{j=1}^{n-k}(m\cdot v_j)\, v_j  .$$

Using this formula for $u$ in the expression above for $-d\xi_p(v_i)$ gives us that 

$$-d\xi_p(v_i)=\left( -\frac{v_i\cdot m}{h^3f}(gh^2+\kappa_i f^2) \, y\, ,\, \sum_{j=1}^{n-k}c_{ij}v_j\right)$$

where $c_{ij}=-\frac{1}{h^3}\kappa_i (v_i\cdot m)(v_j\cdot m) + (gh^{-1}+\kappa_ih^{-1})\delta_{ij}$. Recall that since $\xi$ is defined on $S^k\times M_0$ and not $M=\varphi(S^k\times M_0)$,  the expression above represents the directional derivative of the Gauss map defined on $M$ with respect to the vector $d\varphi(v_i)=\left(-\frac{v_i\cdot m}{f}\, y,v_i \right)$.

We also know that 

$$ -d\xi_p(v_i)=\sum_{j=i}^{n-k}b_{ij}d\varphi_p(v_j)= \left(-\sum_{j=1}^{n-k}b_{ij}\frac{v_i\cdot m}{f}\, y\, , \,\sum_{j=1}^{n-k }b_{ij}v_j \right) . $$

Since the vectors $\{v_1,\dots, v_{n-k}\}$ are linearly independent and $\sum_{j=1}^{n-k }b_{ij}v_j =\sum_{j=1}^{n-k }c_{ij}v_j $, then $c_{ij}=b_{ij}$. Therefore if $\{w_1,\dots,w_k\}$ is a basis for $T_yS^k$, then using the matrix representation of $-d\xi$ with respect to the basis $\{w_1,\dots,w_k,d\varphi(v_1), \dots d\varphi(v_{n-k})\}$, we obtain that the trace of $-d\xi$ is equal to 

$$nH = k\lambda_0+\sum_{i=1}^{n-k}c_{ii}=k \frac{g}{h}+(n-k)\frac{g}{h}+(n-k)\frac{H_0}{h}-\frac{1}{h^3}\sum_{i=1}^{n-k } \kappa_i(v_i\cdot m)^2=\frac{ng+(n-k)H_0}{h}-\frac{1}{h^3}\sum_{i=1}^{n-k } \kappa_i(v_i\cdot m)^2 .$$

\end{proof}


\begin{rem} When $M_0=S^{n-k}(r)$ is the $(n-k)$-dimensional sphere with radius $r$, then $M=S^k(\sqrt{1-r^2})\times S^{n-k}(r)$. If we take $N=\frac{m}{r}$, then $H_0=-\frac{1}{r}$, $g=r$, $f=h=\sqrt{1-r^2}$ and $v_i\cdot m=0$ for all $i$. Then we have that 

$$nH=\frac{-\frac{n-k}{r}+n r}{\sqrt{1-r^2}}= \frac{k-n+n r^2}{r\sqrt{1-r^2}} .$$

In this case the mean curvature of $M$ is well-known. The formula above agrees with the one provided in \cite{ABP}. 
\end{rem}


\section{Case $M_0$ is a surface of revolution}

Let us consider the case when $M_0$ is the surface obtained by revolving the curve $(f_1(u),f_2(u),0)$ around the $x$-axis. We will assume that this curve is parmetrized by arc length, that is, we will assume that 
$(f_1^\prime(u))^2+(f_2^\prime(u))^2=1$ for all $u$. We will also assume that $f_2(u)>0$. In this case 

\begin{eqnarray}\label{exrev}
\varphi(y,u,v)=\left(\sqrt{1-f_1^2(u)-f_2^2(u)}\, y\, ,\, f_1(u),f_2(u)\cos(v),f_2(u)\sin(v)\right) .
\end{eqnarray}

The immersion $M_0$ is given by $\left(\, f_1(u),f_2(u)\cos(v),f_2(u)\sin(v)\right)$ and its Gauss map is given by

$$N=\left( -f_2^\prime(u),f_1^\prime(u)\cos(v),f_1^\prime(u)\sin(v)\right)\, ,$$

the principal directions are given by the vectors

$$v_1=\left( f_1^\prime(u),f_2^\prime(u)\cos(v),f_2^\prime(u)\sin(v)\right)\quad\hbox{and}\quad v_2=(0,-\sin(v),\cos(v))$$

and the principal curvatures of $M_0$ are

\begin{eqnarray}\label{kappas}
\kappa_1= f_2^{\prime\prime}(u)f_1^\prime(u)-f_1^{\prime\prime}(u)f_2^\prime(u)
\quad\hbox{and}\quad \kappa_2=-\frac{f_1^\prime(u)}{f_2(u)} .
\end{eqnarray}

Moreover, we have that $(v_1\cdot m)=f_1(u)f_1^\prime(u)+f_2(u)f_2^\prime(u)=-f^\prime(u)f(u)$, $(v_2\cdot m)=0$ and

\begin{eqnarray}\label{fgh one variable}
f(u)=\sqrt{1-f_1(u)^2-f_2(u)^2},\quad g=f_2(u)f_1^\prime(u)-f_1(u)f_2^\prime(u),\quad h=\sqrt{1-g^2(u)}.
\end{eqnarray}

Since $M_0$ has dimension 2, $k=n-2$ and we have that 

$$nH=\frac{ng+\kappa_1+\kappa_2}{h}-\frac{1}{h^3}(\kappa_1(ff^\prime)^2) .$$

%
%

\subsection{Eigenvalues of the Laplace and stability operator.}

In this section we will assume that we have a $T$-periodic solution $\alpha(u)=(f_1(u),f_2(u))$ with $f_2(u)>0$, parametrized by arc-length that satisfies  $f_1^2(u)+f_2^2(u)<1$ for all $u$ that satisfies minimality equation 

$$ \frac{ng+\kappa_1+\kappa_2}{h}-\frac{1}{h^3}(\kappa_1(ff^\prime)^2)=0\, ,$$

where $f,g,h,\kappa_1$ and $\kappa_2$ are given in Equations \eqref{fgh one variable} and \eqref{kappas}. We have the following formula for the Laplace operator.

\begin{lem} \label{lap}For any function $\zeta:M\longrightarrow \mathbb{R}$, we have that 

$$\Delta \zeta=\frac{\Delta_{S^k}\zeta}{f^2}+
\frac{1}{1+(f^\prime)^2}
\left( \left( k\frac{f^\prime}{f}+\frac{f_2^\prime}{f_2}-\frac{f^\prime f^{\prime\prime}}{1+(f^\prime)^2}\right) \frac{\partial \zeta}{\partial u} +  \frac{\partial^2 \zeta}{\partial u^2}\right)+\frac{1}{f_2^2} \frac{\partial^2 \zeta}{\partial v^2} $$

where $\Delta_{S^k}$ is the Laplacian on the $k$ dimensional unit sphere.
\end{lem}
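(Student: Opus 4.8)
The plan is to compute the Laplace--Beltrami operator of the immersion $\varphi$ from Equation \eqref{exrev} directly from the induced metric, exploiting the warped-product structure of the parametrization. First I would determine the induced metric $g_{ab}$ on $S^k\times M_0$ by differentiating $\varphi$ with respect to the three families of coordinates: the spherical directions $w_\alpha\in T_yS^k$, the $u$-direction, and the $v$-direction. From Proposition \ref{nH} we already have $d\varphi(v_i)=(-\tfrac{v_i\cdot m}{f}\,y,\,v_i)$, and since $v_1\cdot m=-f'f$ while $v_2\cdot m=0$, the vector $d\varphi$ along the $u$-curve has squared length $1+(f')^2$ (the extra $(f')^2$ coming from the $y$-component $-\tfrac{v_1\cdot m}{f}y=f'y$), the $v$-curve has squared length $f_2^2$, and each spherical direction contributes $f^2$ because $d\varphi(w)=(f\,w,0)$. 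The key structural fact to verify is that these three blocks are mutually orthogonal, so that the metric is diagonal:
\begin{eqnarray*}
g = f^2\, g_{S^k}\oplus \big(1+(f')^2\big)\,du^2 \oplus f_2^2\, dv^2 .
\end{eqnarray*}
Orthogonality between the $S^k$-block and the $(u,v)$-block follows because the $y$ and the $m$ components live in complementary Euclidean factors; orthogonality of $\partial_u$ and $\partial_v$ reduces to $v_1\cdot v_2=0$ together with the product $y\cdot 0$ vanishing.

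Once the metric is diagonal the Laplacian is the standard weighted formula
\begin{eqnarray*}
\Delta\zeta=\frac{1}{\sqrt{\det g}}\sum_{a}\partial_a\!\left(\sqrt{\det g}\; g^{aa}\,\partial_a\zeta\right),
\end{eqnarray*}
so I would next compute $\sqrt{\det g}$. Since the $S^k$-factor contributes $f^{2k}\sqrt{\det g_{S^k}}$ and the two one-dimensional factors contribute $\sqrt{1+(f')^2}$ and $f_2$, the total volume density is $\sqrt{\det g}=f^{k}\sqrt{1+(f')^2}\,f_2\,\sqrt{\det g_{S^k}}$. The spherical term then collapses to $\tfrac{1}{f^2}\Delta_{S^k}\zeta$ because $f$ and $f_2$ depend only on $u$ and so pass through the $S^k$-derivatives; the $v$-term gives $\tfrac{1}{f_2^2}\partial_v^2\zeta$ for the same reason. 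The only genuinely computational term is the $u$-term, where $g^{uu}=\tfrac{1}{1+(f')^2}$ is itself $u$-dependent and must be differentiated along with the density.

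The main obstacle, then, is matching the first-order coefficient of $\partial_u\zeta$ to the claimed expression $k\tfrac{f'}{f}+\tfrac{f_2'}{f_2}-\tfrac{f'f''}{1+(f')^2}$, all divided by $1+(f')^2$. Carrying out $\tfrac{1}{\sqrt{\det g}}\partial_u(\sqrt{\det g}\,g^{uu})$ by the product rule, the logarithmic derivative $\partial_u\log\sqrt{\det g}=k\tfrac{f'}{f}+\tfrac{f_2'}{f_2}+\tfrac{f'f''}{1+(f')^2}$ produces the first two terms cleanly, while differentiating $g^{uu}=\bigl(1+(f')^2\bigr)^{-1}$ yields $-\tfrac{2f'f''}{(1+(f')^2)^2}$; after multiplying by $1+(f')^2$ and combining with the $\tfrac{f'f''}{1+(f')^2}$ piece from the density, the $f'f''$ contributions consolidate into the single $-\tfrac{f'f''}{1+(f')^2}$ shown in the lemma. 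I would verify this cancellation carefully, since the sign and the factor of two are exactly where an error would hide; everything else is bookkeeping on a diagonal metric.
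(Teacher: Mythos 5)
Your proposal is correct and follows essentially the same route as the paper: both verify that the induced metric of the parametrization is diagonal with blocks $f^2 g_{S^k}$, $\bigl(1+(f')^2\bigr)\,du^2$ and $f_2^2\,dv^2$, and then read off the first-order coefficient from the $u$-dependence of the metric (the paper packages this as $\Delta\zeta=\operatorname{div}X_1+\operatorname{div}X_2+\operatorname{div}X_3$ for the three orthogonal components of $\nabla\zeta$ rather than via the volume-density formula, but the computation of $k\tfrac{f'}{f}+\tfrac{f_2'}{f_2}-\tfrac{f'f''}{1+(f')^2}$ is the same, and your cancellation of the $f'f''$ terms checks out). One small correction to your orthogonality argument: the $S^k$-block is not orthogonal to $\partial_u$ merely because the Euclidean factors split, since $d\varphi(\partial_u)$ has the nonzero $y$-component $f'y$; the vanishing of the cross term uses $w\cdot y=0$ for $w\in T_yS^k$.
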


\begin{proof} Assume that $\eta:W\subset\mathbb{R}^k\longrightarrow S^k$ is a conformal parametrization. Let us use the indices $i,j$ with range from 1 to $k$. Denoting $\frac{\partial \eta}{\partial w_i}=\eta_i$ we have that $\eta_i\cdot \eta_j=\rho^2\delta_{ij}$. The first fundamental form of 

\begin{eqnarray}\label{parametrization}
\varphi(w,u,v)=\left(f(u)\eta(w) \, ,\, f_1(u)\, ,\, f_2(u)\cos(v)\, , \, f_2(u)\sin(v)\right)
\end{eqnarray} 

is diagonal, and the diagonal entries are given by 

$$\varphi_i\cdot \varphi_j=f^2\rho^2\delta_{ij},\quad \varphi_{k+1}\cdot \varphi_{k+1}=(f^\prime)^2+1\quad\hbox{and}\quad 
\varphi_{k+2}\cdot \varphi_{k+2}=f_2^2 . $$
We are denoting $\varphi_{k+1}=\frac{\partial \varphi}{\partial u}$ and   $\varphi_{k+2}=\frac{\partial \varphi}{\partial v}$. A direct computation shows that 

\begin{eqnarray*}
\varphi_i\cdot \nabla_{\varphi_i}\varphi_j= \varphi_i\cdot \varphi_{ij}=f^2\, (\eta_i\cdot \nabla_{\eta_i}\eta_j),\quad
\varphi_i\cdot \nabla_{\varphi_i}\varphi_{k+1}=\varphi_i\cdot \varphi_{i\, k+1}=ff^\prime\rho^2, \quad
\varphi_i\cdot \nabla_{\varphi_i}\varphi_{k+2}&=&\varphi_i\cdot \varphi_{i\, k+2}=0\\
\varphi_{k+1}\cdot \nabla_{\varphi_{k+1}}\varphi_{i}=0,\quad 
\varphi_{k+1}\cdot \nabla_{\varphi_{k+1}}\varphi_{k+1}=f^\prime f^{\prime\prime},\quad 
\varphi_{k+1}\cdot \nabla_{\varphi_{k+1}}\varphi_{k+2}=0\\
\varphi_{k+2}\cdot \nabla_{\varphi_{k+2}}\varphi_{i}=0,\quad 
\varphi_{k+2}\cdot \nabla_{\varphi_{k+2}}\varphi_{k+1}=f_2 f_2^{\prime},\quad \hbox{and}\quad
\varphi_{k+2}\cdot \nabla_{\varphi_{k+2}}\varphi_{k+2}=0 .
\end{eqnarray*}

We have that $\nabla\zeta=X_1+X_2+X_3$, where

$$X_1=\frac{1}{1+(f^\prime)^2}\frac{\partial \zeta}{\partial u} \varphi_{k+1},\quad
X_2=\frac{1}{f_2^2}\frac{\partial \zeta}{\partial v} \varphi_{k+2}\, \quad\hbox{and}\quad X_3=\sum_i \frac{1}{f^2\rho^2} \frac{\partial \zeta}{\partial w_i}\varphi_i . $$

Now, $\Delta \zeta=div(X_1)+div(X_2)+div(X_3)$. Computing the divergence of each vector field $X_i$ give us

\begin{eqnarray*}
div(X_1)&=& \frac{1}{1+(f^\prime)^2} \varphi_{k+1}\cdot \nabla_{\varphi_{k+1}} X_1 
+\frac{1}{f_2^2}\varphi_{k+2}\cdot \nabla_{\varphi_{k+2}} X_1 + \frac{1}{f^2\rho^2} \sum_j \varphi_j\cdot \nabla_{\varphi_j} X_1\\
&=&\left( \frac{1}{1+(f^\prime)^2} \zeta_u \right)_u+ \frac{f^\prime f^{\prime\prime}}{(1+(f^\prime)^2)^2}\zeta_u+
 \frac{1}{1+(f^\prime)^2}\frac{f_2^\prime}{f_2}\zeta_u+\sum_j \frac{1}{f^2} \frac{ff^\prime}{1+(f^\prime)^2}\zeta_u\\
 &=&  \frac{1}{1+(f^\prime)^2}\left(\left(k \frac{f^\prime}{f}+\frac{f_2^\prime}{f_2}-\frac{f^\prime f^{\prime\prime}}{1+(f^\prime)^2}\right)\, \zeta_u+ \zeta_{uu}\right)
\end{eqnarray*}

\begin{eqnarray*}
div(X_2)&=& \frac{1}{1+(f^\prime)^2} \varphi_{k+1}\cdot \nabla_{\varphi_{k+1}} X_2 
+\frac{1}{f_2^2}\varphi_{k+2}\cdot \nabla_{\varphi_{k+2}} X_2 + \frac{1}{f^2\rho^2} \sum_j \varphi_j\cdot \nabla_{\varphi_j} X_2\\
&=&0+\left(\frac{1}{f_2^2}\zeta_v\right)_v+0=\frac{1}{f_2^2}\zeta_{vv}
\end{eqnarray*}

and

\begin{eqnarray*}
div(X_3)&=& \frac{1}{1+(f^\prime)^2} \varphi_{k+1}\cdot \nabla_{\varphi_{k+1}} X_3 
+\frac{1}{f_2^2}\varphi_{k+2}\cdot \nabla_{\varphi_{k+2}} X_3 + \frac{1}{f^2\rho^2} \sum_j \varphi_j\cdot \nabla_{\varphi_j} X_3\\
&=&0+0+\sum_j \left(\sum_i \left( \frac{1}{f^2\rho^2}  \frac{\partial \zeta}{\partial w_i} \right)_{w_i}\delta_{ij}
+ \frac{1}{f^2\rho^4} \sum_i \frac{\partial \zeta}{\partial w_i}\left(\eta_j\cdot \nabla_{\eta_j}\eta_i\right) \right) =\frac{1}{f^2}\Delta_{S^k}\zeta .
\end{eqnarray*}

Combining the three expressions above the Lemma follows. \end{proof}

Since the stability operator for a  hypersurface in $S^{n+1}$ is given by $J(\zeta)=\Delta \zeta+|A|^2\zeta+n\zeta$, we need to find an expression for $|A|^2$. The following lemma will help us find this expression.

\begin{lem}
The principal curvatures of the hypersurface $M$ are:

$$\lambda_0=\frac{g}{h},\quad  \lambda_1=\frac{g+\kappa_1}{h}-\frac{\kappa_1(ff^\prime)^2}{h^3}\quad\hbox{and}\quad \lambda_2=\frac{g+\kappa_2}{h} . $$
\end{lem}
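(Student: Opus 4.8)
The plan is to read off the three principal curvatures directly from the work already done in Proposition \ref{nH}, specialized to the surface-of-revolution setting in which $M_0$ has dimension $2$, so that $n-k=2$ and there are exactly two principal directions $v_1,v_2$ on $M_0$. The curvature $\lambda_0=g/h$ is immediate: Equation \eqref{lambda0} shows that every tangent vector $d\varphi_p(w)$ with $w\in T_yS^k$ satisfies $d\xi_p(w)=-\lambda_0\,d\varphi_p(w)$, so $\lambda_0=g/h$ is a principal curvature of multiplicity at least $k$. It therefore remains only to identify the two principal curvatures coming from the $M_0$-directions.

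For these I would invoke the matrix computed in the proof of Proposition \ref{nH}, namely
$$c_{ij}=-\frac{\kappa_i\,(v_i\cdot m)(v_j\cdot m)}{h^3}+\frac{g+\kappa_i}{h}\,\delta_{ij},$$
together with the facts recorded just before this subsection that in the surface-of-revolution case $v_1\cdot m=-f f'$ and $v_2\cdot m=0$. The vanishing of $v_2\cdot m$ is the crucial point: it forces both off-diagonal entries $c_{12}$ and $c_{21}$ to be zero, and it also annihilates the first term of $c_{22}$. Substituting $(v_1\cdot m)^2=(f f')^2$, I then obtain the diagonal matrix with entries
$$c_{11}=\frac{g+\kappa_1}{h}-\frac{\kappa_1 (f f')^2}{h^3},\qquad c_{22}=\frac{g+\kappa_2}{h}.$$

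To finish, recall that in the proof of Proposition \ref{nH} it is shown that $-d\xi_p(v_i)=\sum_j c_{ij}\,d\varphi_p(v_j)$, so $(c_{ij})$ is precisely the matrix of the shape operator of $M$ restricted to the subspace spanned by $d\varphi_p(v_1),d\varphi_p(v_2)$; moreover this block does not mix with the $S^k$-block, which is $\lambda_0 I_k$. Since the $2\times2$ block is diagonal, its eigenvalues are its diagonal entries, giving $\lambda_1=c_{11}$ and $\lambda_2=c_{22}$, exactly as stated.

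I do not expect any serious obstacle here; the one thing that genuinely needs checking is that the two off-diagonal entries vanish, and this is guaranteed by $v_2\cdot m=0$. I would, however, note in passing that $v_2\cdot m=0$ also makes $d\varphi_p(v_1)$ and $d\varphi_p(v_2)$ orthogonal, since $d\varphi_p(v_i)=(-\tfrac{v_i\cdot m}{f}\,y,v_i)$ yields $d\varphi_p(v_1)\cdot d\varphi_p(v_2)=\tfrac{(v_1\cdot m)(v_2\cdot m)}{f^2}+v_1\cdot v_2=0$. Hence the pushed-forward vectors are genuine mutually orthogonal principal directions, consistent with the shape operator being diagonalized by them.
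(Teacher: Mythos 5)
Your proposal is correct and follows essentially the same route as the paper: both cite the block structure and the entries $c_{ij}$ from the proof of Proposition \ref{nH}, then use $v_1\cdot m=-ff'$ and $v_2\cdot m=0$ to conclude that the $2\times 2$ block is diagonal with the stated entries. Your closing remark on the orthogonality of $d\varphi_p(v_1)$ and $d\varphi_p(v_2)$ is a harmless addition not present in the paper.
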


\begin{proof}
Let us consider the parametrization of $M$ defined in Equation (\ref{parametrization}). The argument in the proof of Lemma \ref{nH} gives us that the matrix of the  operator $-d\xi:T_mM\to T_mM$, with respect to the basis 
$\varphi_1,\dots \varphi_k,\varphi_{k+1},\varphi_{k+2}$, is the matrix $\begin{pmatrix}\lambda_0I_k& {\bf 0}\\{\bf 0}&C\end{pmatrix}$ where $I_k$ is the $k\times k$ identity matrix and the matrix $C$ is the two by two matrix with entries $c_{ij}=-\frac{1}{h^3}\kappa_i (v_i\cdot m)(v_j\cdot m) + (gh^{-1}+\kappa_ih^{-1})\delta_{ij}$. Since $v_1\cdot m=-ff^\prime$ and $v_2\cdot m=0$ we get that $C$ is a diagonal matrix and the lemma follows.
\end{proof}

In order to describe the spectrum of the Laplace operator on $M$ we will use the spectrum of the Laplace operator on sphere. The following proposition is well-known, see for example \cite{B}.

\begin{prop}\label{sLs}
The spectrum of the Laplace operator on the unit $k$-dimensional sphere is $\alpha_i=i(k+i-1)$, $i=0,\dots$ with multiplicities $m_0=1, \, m_1=k+1$ and 

$$m_i=\binom{k+i}{i}-\binom{k+i-2}{i-2}, \quad i= 2,\dots .$$
\end{prop}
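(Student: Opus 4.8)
\section*{Proof proposal}

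The plan is to realize the eigenfunctions of $\Delta_{S^k}$ as restrictions of harmonic homogeneous polynomials on $\mathbb{R}^{k+1}$, and then to obtain the multiplicities from a dimension count. First I would write points of $\mathbb{R}^{k+1}\setminus\{0\}$ in polar form $x=r\omega$ with $r=|x|$ and $\omega\in S^k$, and recall the classical decomposition of the Euclidean Laplacian,
$$\Delta_{\mathbb{R}^{k+1}}=\frac{\partial^2}{\partial r^2}+\frac{k}{r}\frac{\partial}{\partial r}+\frac{1}{r^2}\Delta_{S^k}.$$
Given a homogeneous polynomial $P$ of degree $i$ we have $P(x)=r^i\,Y(\omega)$, where $Y$ is the restriction of $P$ to $S^k$. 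Applying the displayed operator and collecting the $r^{i-2}$ terms gives
$$\Delta_{\mathbb{R}^{k+1}}P=r^{i-2}\bigl(i(i+k-1)\,Y+\Delta_{S^k}Y\bigr),$$
so $P$ is harmonic exactly when $\Delta_{S^k}Y=-i(i+k-1)\,Y$. Hence every harmonic homogeneous polynomial of degree $i$ restricts to an eigenfunction with eigenvalue $\alpha_i=i(k+i-1)$, matching the convention under which the listed spectrum is nonnegative.

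Next I would compute the dimension of the space $\mathcal{H}_i$ of harmonic homogeneous polynomials of degree $i$ in $k+1$ variables. Writing $\mathcal{P}_i$ for the space of all homogeneous polynomials of degree $i$, a monomial count gives $\dim\mathcal{P}_i=\binom{k+i}{i}$. The key algebraic input is the orthogonal decomposition $\mathcal{P}_i=\mathcal{H}_i\oplus|x|^2\mathcal{P}_{i-2}$, equivalently the surjectivity of $\Delta_{\mathbb{R}^{k+1}}:\mathcal{P}_i\to\mathcal{P}_{i-2}$. Granting this and using that multiplication by $|x|^2$ is injective,
$$\dim\mathcal{H}_i=\dim\mathcal{P}_i-\dim\mathcal{P}_{i-2}=\binom{k+i}{i}-\binom{k+i-2}{i-2},$$
which for $i=0,1$ reduces to $m_0=1$ and $m_1=k+1$ (the empty sum and the full space of linear forms), and for $i\ge 2$ is exactly the stated multiplicity $m_i$.

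To conclude that these are \emph{all} the eigenvalues, with the correct multiplicities, I would argue completeness. Iterating $\mathcal{P}_i=\mathcal{H}_i\oplus|x|^2\mathcal{P}_{i-2}$ shows that on $S^k$, where $|x|=1$, every polynomial restricts to a finite sum of restrictions of harmonic polynomials; since polynomials are dense in $C(S^k)$ by Stone--Weierstrass, hence in $L^2(S^k)$, the span of all the $\mathcal{H}_j|_{S^k}$ is dense. Because the $\alpha_i$ are pairwise distinct, the corresponding eigenspaces are mutually $L^2$-orthogonal, so the direct sum $\bigoplus_i\mathcal{H}_i|_{S^k}$ exhausts $L^2(S^k)$ and $\alpha_i$ occurs with multiplicity exactly $\dim\mathcal{H}_i$.

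The main obstacle is the surjectivity of $\Delta_{\mathbb{R}^{k+1}}:\mathcal{P}_i\to\mathcal{P}_{i-2}$ (equivalently the decomposition $\mathcal{P}_i=\mathcal{H}_i\oplus|x|^2\mathcal{P}_{i-2}$). I would establish it by introducing the Fischer (apolar) inner product on $\mathcal{P}_i$, under which multiplication by $|x|^2$ and the Laplacian are mutually adjoint; then the image of $|x|^2:\mathcal{P}_{i-2}\to\mathcal{P}_i$ is precisely the orthogonal complement of $\ker\Delta=\mathcal{H}_i$, giving both the orthogonal splitting and the surjectivity in one stroke. The remaining ingredients, namely the polar computation and the monomial count for $\dim\mathcal{P}_i$, are routine and I would not belabor them. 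This matches the reference \cite{B}.
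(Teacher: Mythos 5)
Your proposal is correct and complete: the polar-coordinate computation of the eigenvalue, the dimension count via $\mathcal{P}_i=\mathcal{H}_i\oplus|x|^2\mathcal{P}_{i-2}$ (justified by the Fischer inner product), and the Stone--Weierstrass completeness argument together give exactly the stated spectrum and multiplicities. The paper offers no proof of this proposition, citing it as well-known, and your argument is precisely the standard spherical-harmonics proof contained in such references, so there is nothing to compare beyond noting that you have supplied the details the paper omits.
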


Similar to the work done in \cite{P4} we have the following theorem.

\begin{thm}  \label{eigenvalues} Let $\varphi:S^k\times\mathbb{R}\times\mathbb{R}\longrightarrow M\subset S^{k+3} \subset \mathbb{R}^{k+4}$ be the immersion 

$$\varphi(y,u,v)=\left(\sqrt{1-f_1^2(u)-f_2^2(u)}\, y\, ,\, f_1(u),f_2(u)\cos(v),f_2(u)\sin(v)\right)$$

where $y\in S^k\subset \mathbb{R}^{k+1}$, $f_1$ and $f_2$ are $T$-periodic functions with $f_2(u)>0$, $f_1(u)^2+f_2(u)^2<1$, and 
 $f_1^\prime(u)^2+f_2^\prime(u)^2=1$
Let $\alpha_0=0$, $\alpha_1=k,\dots, \alpha_i=i(k+i-1)$ denote the eigenvalues of the Laplacian of the $k$-dimensional unit sphere $S^k$. The spectrum of the Laplace operator $\Delta$ on $M$ is given by the union $\cup_{i,j=0}^\infty\Gamma_{ij}$ where 

$$\Gamma_{ij}=\{\lambda_{ij}(1),\lambda_{ij}(2),\dots\}$$ 

is the ordered spectrum of the second order differential equation

$$L_{ij}(\eta)= \frac{1}{1+(f^\prime)^2}\left(\eta^{\prime\prime}+\left(k\frac{f^\prime}{f}+\frac{f_2^\prime}{f_2}-\frac{f^\prime f^{\prime\prime}}{1+(f^\prime)^2}\right)\eta^\prime\right)-\left(\frac{\alpha_i}{f^2}+\frac{j^2}{f_2^2}\right) \eta$$

defined on the set of $T$-periodic real functions.

Likewise, the spectrum of the stability operator $J=\Delta+k+2+|A|^2$ on $M$ is given by the union $\cup_{i,j=0}^\infty\bar{\Gamma}_{ij}$ where 

$$\bar{\Gamma}_{ij}=\{\bar{\lambda}_{ij}(1),\bar{\lambda}_{ij}(2),\dots\}$$ 

is the ordered spectrum of the second order differential equation

$$S_{ij}(\eta)= \frac{1}{1+(f^\prime)^2}\left(\eta^{\prime\prime}+\left(k\frac{f^\prime}{f}+\frac{f_2^\prime}{f_2}-\frac{f^\prime f^{\prime\prime}}{1+(f^\prime)^2}\right)\eta^\prime\right)-\left(\frac{\alpha_i}{f^2}+\frac{j^2}{f_2^2}-n-\left(k\lambda_0^2+\lambda_1^2+\lambda_2^2\right)\right) \eta .$$


\end{thm}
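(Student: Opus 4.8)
The plan is to exploit the warped-product structure of $M$ to separate variables, reducing each of the two three-dimensional eigenvalue problems to a family of one-dimensional Sturm--Liouville problems indexed by the pair $(i,j)$. First I would record the induced volume element. From the first fundamental form computed in the proof of Lemma~\ref{lap}, the metric is diagonal with blocks $f^2\rho^2 I_k$, $1+(f')^2$ and $f_2^2$, so that
$$dV = f^k\sqrt{1+(f')^2}\,f_2\;\; dV_{S^k}\, du\, dv,$$
where the weight $w(u)=f^k\sqrt{1+(f')^2}\,f_2$ depends on $u$ alone. This factorization identifies $L^2(M)$ with the Hilbert tensor product $L^2(S^k)\otimes L^2_w(\mathbb{R}/T\mathbb{Z})\otimes L^2(\mathbb{R}/2\pi\mathbb{Z})$, after which I would decompose the two angular factors spectrally: write $L^2(S^k)=\bigoplus_i \mathcal{H}_i$, with $\mathcal{H}_i$ the degree-$i$ spherical-harmonic eigenspace of $\Delta_{S^k}$ of eigenvalue $-\alpha_i$ and dimension $m_i$ (Proposition~\ref{sLs}), and expand the $v$-dependence in the Fourier modes $\cos(jv),\sin(jv)$ with $\partial_v^2$-eigenvalue $-j^2$. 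This yields the orthogonal splitting $L^2(M)=\bigoplus_{i,j}V_{ij}$, where $V_{ij}=\mathcal{H}_i\otimes(\mathrm{span}\{\cos jv,\sin jv\})\otimes L^2_w$.

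The key computational step is to substitute $\zeta(y,u,v)=Y_i(y)\,\Theta_j(v)\,\eta(u)$, with $Y_i\in\mathcal{H}_i$ and $\Theta_j\in\{\cos jv,\sin jv\}$, into the formula of Lemma~\ref{lap}. Using $\Delta_{S^k}Y_i=-\alpha_i Y_i$ and $\partial_v^2\Theta_j=-j^2\Theta_j$, the terms $\Delta_{S^k}\zeta/f^2$ and $f_2^{-2}\partial_v^2\zeta$ collapse to the multiplication operators $-\alpha_i/f^2$ and $-j^2/f_2^2$, and the whole expression factors as $(L_{ij}\eta)(u)\,Y_i(y)\,\Theta_j(v)$. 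Hence $\Delta$ preserves each $V_{ij}$ and acts there exactly as the one-dimensional operator $L_{ij}$. A direct check (differentiating $p=f^k f_2(1+(f')^2)^{-1/2}$) shows that $L_{ij}$ is in Sturm--Liouville divergence form $w^{-1}(p\eta')'-q\eta$ with $q=\alpha_i/f^2+j^2/f_2^2$; it is therefore self-adjoint on $L^2_w(\mathbb{R}/T\mathbb{Z})$ under periodic boundary conditions, with spectrum $\Gamma_{ij}$ real, discrete, bounded above, and accumulating only at $-\infty$. Taking the orthogonal direct sum over all $(i,j)$ gives $\mathrm{spec}(\Delta)=\bigcup_{i,j}\Gamma_{ij}$.

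For the stability operator I would invoke the lemma computing the principal curvatures $\lambda_0,\lambda_1,\lambda_2$: since each is a function of $u$ only, so is the potential $|A|^2=k\lambda_0^2+\lambda_1^2+\lambda_2^2$. Thus $J=\Delta+n+|A|^2$, with $n=k+2$, again commutes with the $S^k$- and $v$-actions, preserves the subspaces $V_{ij}$, and restricts on each to $S_{ij}=L_{ij}+\bigl(n+|A|^2\bigr)$. The same separation and self-adjointness argument then yields $\mathrm{spec}(J)=\bigcup_{i,j}\bar{\Gamma}_{ij}$.

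The step I expect to be the main obstacle is the completeness, or no-loss, assertion underlying the two unions: one must verify that the orthogonal family $\{V_{ij}\}$ genuinely exhausts $L^2(M)$ and that, because each $L_{ij}$ (resp.\ $S_{ij}$) is a regular self-adjoint Sturm--Liouville operator with purely discrete spectrum and smooth $T$-periodic eigenfunctions, the union of the $\Gamma_{ij}$ is precisely the spectrum of the elliptic, essentially self-adjoint operator $\Delta$ on the closed manifold $M$, with no spurious or missing eigenvalues introduced by the reduction. This requires checking that the products $Y_i(y)\Theta_j(v)\eta_{ij,\ell}(u)$ of angular harmonics with the one-dimensional eigenfunctions form a complete orthogonal system of smooth eigenfunctions of $\Delta$ on $M$, which follows from completeness of spherical harmonics on $S^k$, of the Fourier basis in $v$, and of the Sturm--Liouville eigenfunctions in $L^2_w$, together with elliptic regularity identifying these $L^2$-eigenfunctions with genuine smooth eigenfunctions on $M$.
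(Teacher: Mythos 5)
Your proposal is correct and follows essentially the same route as the paper: separate variables via spherical harmonics on $S^k$, Fourier modes in $v$, and $T$-periodic eigenfunctions of the one-dimensional operator $L_{ij}$ (resp.\ $S_{ij}$), then argue that these product eigenfunctions exhaust the spectrum because every function on $M$ expands into such products. The only difference is that you make explicit the Sturm--Liouville divergence form and self-adjointness of $L_{ij}$ with respect to the weight $f^k f_2\sqrt{1+(f')^2}$, which justifies the completeness of the $u$-eigenfunctions that the paper's proof asserts without comment.
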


\begin{proof}
Let us assume that $\xi_i:S^k\to\mathbb{R}$ satisfies $\Delta_{S^k}\xi_i(y)+\alpha_i\xi_i(y)=0$;  
$g_j:\mathbb{R}\to\mathbb{R}$ is a $2\pi$-periodic function satisfying $g^{\prime\prime}(v)=-j^2g(v)$;  
and $\eta_l:\mathbb{R}\to\mathbb{R}$ is a $T$-periodic function satisfying $L_{ij}(\eta_l)+\lambda_{ij}(l)\eta_l=0$.  
Using Lemma \ref{lap}, we see that the function $\gamma_{ijl}:M\to\mathbb{R}$ defined by
\[
\gamma_{ijl}(\varphi(y,u,v))=\xi_i(y)g_j(v)\eta_l(u)
\]
satisfies $\Delta\gamma_{ijl}+\lambda_{ij}(l)\gamma_{ijl}=0$.  

To show that all eigenfunctions of the Laplacian are of the form $\gamma_{ijl}$, it is enough to prove that every function on $M$ can be expressed as a sum of such functions.  
Indeed, if $\gamma$ is a function on $M$, then for any $(y,u)$ we may expand it in Fourier series with respect to $v$:
\[
\gamma(y,u,v)=\sum_{j=0}^\infty c_j(y,u)g_j(v).
\]
Here the $c_j(y,u)$ are the Fourier coefficients.  
Next, for fixed $u$, each coefficient admits a spherical harmonic expansion in $y$:
\[
c_j(y,u)=\sum_{i=0}^\infty e_i(u)\xi_i(y),
\]
where the $\xi_i$ are eigenfunctions of $\Delta_{S^k}$ with eigenvalues $\alpha_i$.  
Finally, each $e_i(u)$ can be written as a sum of $T$-periodic eigenfunctions $\eta_l$ of the operator $L_{ij}$.  
Therefore every function $\gamma$ on $M$ can be expressed as a linear combination of the functions $\gamma_{ijl}$, which proves the claim for the spectrum of the Laplacian.  
The proof for the spectrum of the stability operator is analogous.
\end{proof}


\section{Case $M_0$ is a hypersurface of revolution}

In this section we will deduce the equation for the mean curvature of the hypersurface $M\subset S^{n+1}$ when  $M_0\subset \mathbb{R}^{l+2}$ is the hypersurface obtained by revolving the curve $(0,\dots,0,f_2(u),f_1(u))$ around the $x_{l+2}$-axis. Recall that in this case $n-k+1=l+2$. We will assume that 
$(f_1^\prime(u))^2+(f_2^\prime(u))^2=1$ and that $f_2(u)>0$  for all $u$. In this case 

\begin{eqnarray}\label{exrevwithl}
\varphi(y,u,z)=\left(\sqrt{1-f_1^2(u)-f_2^2(u)}\, y\, ,\, f_2(u)z,\, f_1(u)\right)
\end{eqnarray}

where, as in the general case $y\in S^k\subset \mathbb{R}^{k+1}$ and now $z\in S^l\subset \mathbb{R}^{l+1}$. The points in the hypersurface $M_0$ are of the form 

$$m=\left(\, f_2(u)\, z,\, f_1(u)\,\right)\, ,$$

and the Gauss map  of the hypersurface $M_0$ is given by 

$$N=\left( f_1^\prime(u)\, z\, ,-f_2^\prime(u)\right)\, .$$

If $u_1,\dots, u_l$ form a basis for $T_zS^l\subset \mathbb{R}^{l+1}$ then, the principal directions of $M_0$ are $v_1=(u_1,0),\dots,v_l=(u_l,0)$ and $v_{l+1}=\frac{\partial}{\partial u}=\left( f_2^\prime(u) \, z,\,  f_1^\prime(u)\, \right)$ and the principal curvatures of $M_0$ are

$$\kappa_1=\dots=\kappa_l=-\frac{f_1^\prime(u)}{f_2(u)}\quad\hbox{and}\quad  \kappa_{l+1}= f_2^{\prime\prime}(u)f_1^\prime(u)-f_1^{\prime\prime}(u)f_2^\prime(u) .$$

Moreover, if we define  $g$, $h$ and $f$ as in Lemma \ref{fgh}, then we have that  

$$(v_1\cdot m)=\dots=(v_l\cdot m)=0\quad \hbox{and}\quad (v_{l+1}\cdot m)=f_1(u)f_1^\prime(u)+f_2(u)f_2^\prime(u)=-f^\prime(u)f(u)$$

 and

\begin{eqnarray}\label{gH}
nH=\frac{ng+l \kappa_1+\kappa_{l+1}}{h}-\frac{1}{h^3}(\kappa_{l+1}(ff^\prime)^2) .
\end{eqnarray}

The principal curvatures for the immersion $M$ are given by 

\begin{eqnarray}\label{gprinccurv}
\lambda_1=\dots=\lambda_k=\frac{g}{h},\quad \lambda_{k+1}=\dots=\lambda_{k+l}=\frac{g+\kappa_{1}}{h},\quad \lambda_{k+l+1}=\frac{g+\kappa_{l+1}}{h}-\frac{\kappa_{l+1}(ff^\prime)^2}{h^3} .
\end{eqnarray}

All the computations above are very similar to the ones obtained when $M_0$ is a surface of revolution. We can also extend Lemma \ref{lap}. The proof is similar and will be omitted.

\begin{lem} \label{glap} Let $M$ be the immersion of $S^k\times S^l\times \mathbb{R}$ given by

$$\varphi(y,z,u)=\left(\sqrt{1-f_1^2(u)-f_2^2(u)}\, y\, ,\, f_2(u)z,\, f_1(u)\right)$$

where $y\in S^k$, $z\in S^l$ and $(f_1(u),f_2(u))$ defines an immersion of $S^1$, with $f_1$ and $f_2$ $T$-periodic functions and $f_2(u)>0$  and $f_1(u)^2+f_2(u)^2<1$ for all $u$.  For any function $\zeta:M\longrightarrow \mathbb{R}$ we have that 

$$\Delta \zeta=\frac{\Delta_{S^k}\zeta}{f^2}+
\frac{1}{1+(f^\prime)^2}
\left( \left( k\frac{f^\prime}{f}+l\frac{f_2^\prime}{f_2}-\frac{f^\prime f^{\prime\prime}}{1+(f^\prime)^2}\right) \frac{\partial \zeta}{\partial u} +  \frac{\partial^2 \zeta}{\partial u^2}\right)+ \frac{\Delta_{S^l}\zeta}{f_2^2}   $$

where $\Delta_{S^k}$ is the Laplacian on the $k$-dimensional unit sphere and $\Delta_{S^l}$ is the Laplacian on the $l$-dimensional unit sphere.
\end{lem}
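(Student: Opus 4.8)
The plan is to run the computation in the proof of Lemma~\ref{lap} essentially verbatim, the only change being that the single angular coordinate $v$ is replaced by a conformal parametrization of the $l$-dimensional factor $S^l$. First I would fix a conformal parametrization $\eta:W\subset\mathbb{R}^k\to S^k$ with $\eta_i\cdot\eta_j=\rho^2\delta_{ij}$ (indices $i,j$ ranging over $1,\dots,k$) and, in addition, a conformal parametrization $\theta:V\subset\mathbb{R}^l\to S^l$ with $\theta_a\cdot\theta_b=\sigma^2\delta_{ab}$ (indices $a,b$ ranging over $1,\dots,l$). In these coordinates the immersion becomes
\[
\varphi(w,u,s)=\left(f(u)\,\eta(w),\,f_2(u)\,\theta(s),\,f_1(u)\right),
\]
so that $\varphi_{w_i}=(f\eta_i,0,0)$, $\varphi_{s_a}=(0,f_2\theta_a,0)$ and $\varphi_u=(f'\eta,f_2'\theta,f_1')$.

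Next I would verify that the first fundamental form is diagonal, since this is what allows the divergence formula $\mathrm{div}(X)=\sum_\alpha |\varphi_\alpha|^{-2}\,\varphi_\alpha\cdot\nabla_{\varphi_\alpha}X$ to be applied directly. The diagonal entries are $\varphi_{w_i}\cdot\varphi_{w_j}=f^2\rho^2\delta_{ij}$ and $\varphi_{s_a}\cdot\varphi_{s_b}=f_2^2\sigma^2\delta_{ab}$, while $\varphi_u\cdot\varphi_u=(f')^2+(f_2')^2+(f_1')^2=1+(f')^2$ by the arc-length condition. The off-diagonal entries vanish because $\eta_i\cdot\eta=0$ and $\theta_a\cdot\theta=0$ (the position vector of a sphere is normal to its tangent vectors), which kills $\varphi_{w_i}\cdot\varphi_u$ and $\varphi_{s_a}\cdot\varphi_u$, and because the two spherical blocks occupy orthogonal coordinate subspaces of $\mathbb{R}^{k+l+3}$, which kills $\varphi_{w_i}\cdot\varphi_{s_a}$.

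Then I would split $\nabla\zeta=X_1+X_2+X_3$ with $X_1=\frac{1}{1+(f')^2}\zeta_u\,\varphi_u$, $X_2=\sum_a \frac{1}{f_2^2\sigma^2}\zeta_{s_a}\,\varphi_{s_a}$ and $X_3=\sum_i \frac{1}{f^2\rho^2}\zeta_{w_i}\,\varphi_{w_i}$, and compute the three divergences. The term $\mathrm{div}(X_3)$ is identical to the one in Lemma~\ref{lap} and yields $\frac{1}{f^2}\Delta_{S^k}\zeta$. The only genuinely new point is that $X_2$ must now be treated exactly like $X_3$ rather than like the elementary single-variable term of Lemma~\ref{lap}: the sum over the $l$ directions on $S^l$, including the terms $\theta_b\cdot\nabla_{\theta_b}\theta_a$ that encode the Christoffel symbols of $S^l$ in conformal coordinates, assembles into $\frac{1}{f_2^2}\Delta_{S^l}\zeta$, with the $u$- and $w$-contributions to $\mathrm{div}(X_2)$ vanishing. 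For $\mathrm{div}(X_1)$ the coefficient $l\frac{f_2'}{f_2}$ is precisely where the dimension of $S^l$ enters: each of the $l$ directions $\varphi_{s_a}$ contributes $\frac{1}{f_2^2\sigma^2}\cdot\frac{1}{1+(f')^2}\zeta_u\,(f_2 f_2'\sigma^2)$, so the $l$ of them sum to $l\frac{f_2'}{f_2}\cdot\frac{1}{1+(f')^2}\zeta_u$, while the $k$ directions on $S^k$ supply the $k\frac{f'}{f}$ part and the $u$-direction supplies the $\zeta_{uu}$ term together with the $-\frac{f'f''}{1+(f')^2}$ part of the first-order coefficient, all inside the common factor $\frac{1}{1+(f')^2}$. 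Adding the three divergences gives the stated formula.

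The work here is entirely bookkeeping, and the main thing to keep straight is the decoupling of the three blocks: one checks that the mixed second derivatives $\varphi_{w_i s_a}$ vanish and that the only surviving cross terms, $\varphi_{w_i u}\cdot\varphi_{w_i}=ff'\rho^2$ and $\varphi_{s_a u}\cdot\varphi_{s_a}=f_2 f_2'\sigma^2$, feed respectively into the $k\frac{f'}{f}$ and $l\frac{f_2'}{f_2}$ coefficients, while the internal $S^l$ Christoffel terms produce $\Delta_{S^l}$ and nothing else. Since $S^1$ in arc-length coordinates is the flat, trivial-Christoffel one-dimensional instance for which $\Delta_{S^1}\zeta=\zeta_{vv}$, the general case introduces no new phenomenon beyond carrying the extra index range $a=1,\dots,l$; this is exactly why the authors state that the proof is similar and omit it.
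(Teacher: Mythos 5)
Your proposal is correct and is exactly the argument the paper has in mind: it carries out the proof of Lemma~\ref{lap} with the $v$-circle replaced by a conformal parametrization of $S^l$, and all the key checks (diagonal first fundamental form, the cross terms $\varphi_{s_a u}\cdot\varphi_{s_a}=f_2f_2'\sigma^2$ producing the coefficient $l\,f_2'/f_2$, and the $S^l$ Christoffel terms assembling into $\Delta_{S^l}\zeta/f_2^2$) are right. The paper omits this proof precisely because it is the same computation, so there is nothing to add beyond noting that your symbol $\theta$ for the parametrization of $S^l$ clashes with the paper's later use of $\theta$ for the turning angle and should be renamed.
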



\begin{thm}  \label{geigenvalues} Let $M$ be the immersion of $S^k\times S^l\times \mathbb{R}$ given by the immersion

$$\varphi(y,z,u)=\left(\sqrt{1-f_1^2(u)-f_2^2(u)}\, y\, ,\, f_2(u)z,\, f_1(u)\right)$$

where $y\in S^k$, $z\in S^l$ and $(f_1(u),f_2(u))$ defines an immersion of $S^1$, with $f_1$ and $f_2$ $T$-periodic functions and $f_2(u)>0$ and $f_1(u)^2+f_2(u)^2<1$  for all $u$.
Let $\alpha_0=0$, $\alpha_1=k,\dots, \alpha_i=i(k+i-1)$ be the eigenvalue of the Laplacian of the unit $k$-dimensional unit sphere $S^k$ and
 $\beta_0=0$, $\beta_1=l,\dots, \beta_i=i(l+i-1)$ be the eigenvalue of the Laplacian of the unit $l$-dimensional unit sphere $S^l$. The spectrum of the Laplace operator $\Delta$ on $M$ is given by the union $\cup_{i,j=0}^\infty\Gamma_{ij}$ where 

$$\Gamma_{ij}=\{\lambda_{ij}(1),\lambda_{ij}(2),\dots\}$$ 

is the ordered spectrum of the second order differential equation

$$L_{ij}(\eta)= \frac{1}{1+(f^\prime)^2}\left(\eta^{\prime\prime}+\left(k\frac{f^\prime}{f}+l\frac{f_2^\prime}{f_2}-\frac{f^\prime f^{\prime\prime}}{1+(f^\prime)^2}\right)\eta^\prime\right)-\left(\frac{\alpha_i}{f^2}+\frac{\beta_j}{f_2^2}\right) \eta$$

defined on the set of $T$-periodic real functions. 

Likewise, the spectrum of the stability operator $J=\Delta+k+l+1+|A|^2$ on $M$ is given by the union $\cup_{i,j=0}^\infty\bar{\Gamma}_{ij}$ where 

$$\bar{\Gamma}_{ij}=\{\lambda_{ij}(1),\lambda_{ij}(2),\dots\}$$ 

is the ordered spectrum of the second order differential equation

$$S_{ij}(\eta)= \frac{1}{1+(f^\prime)^2}\left(\eta^{\prime\prime}+\left(k\frac{f^\prime}{f}+l\frac{f_2^\prime}{f_2}-\frac{f^\prime f^{\prime\prime}}{1+(f^\prime)^2}\right)\eta^\prime\right)-\left(\frac{\alpha_i}{f^2}+\frac{\beta_j}{f_2^2}-n-\left(k\lambda_1^2+l\lambda_{k+1}^2+\lambda_{k+l+1}^2\right)\right) \eta . $$

Here, the functions $\lambda_i$ are defined in Equation \eqref{gprinccurv}

\end{thm}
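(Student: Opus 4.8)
The plan is to prove this by separation of variables, paralleling the argument for Theorem \ref{eigenvalues}, with the role played there by the one-dimensional Fourier modes $g_j(v)$ on the circle now taken over by spherical harmonics on $S^l$. The starting point is the formula for $\Delta$ in Lemma \ref{glap}: because the coefficients of that operator depend only on $u$, because the $y$-dependence enters solely through $\Delta_{S^k}$, and because the $z$-dependence enters solely through $\Delta_{S^l}$, the operator decouples into a tensor product of operators on the three factors $S^k$, $S^l$, and the $u$-line.

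First I would fix a spherical harmonic $\xi_i$ on $S^k$ with $\Delta_{S^k}\xi_i=-\alpha_i\xi_i$ and a spherical harmonic $\psi_j$ on $S^l$ with $\Delta_{S^l}\psi_j=-\beta_j\psi_j$, and let $\eta_\ell$ be a $T$-periodic solution of $L_{ij}(\eta_\ell)+\lambda_{ij}(\ell)\eta_\ell=0$. Substituting the product $\gamma(\varphi(y,z,u))=\xi_i(y)\psi_j(z)\eta_\ell(u)$ into Lemma \ref{glap} and using the two eigenvalue relations, the terms $\Delta_{S^k}\zeta/f^2$ and $\Delta_{S^l}\zeta/f_2^2$ contribute $-\alpha_i/f^2$ and $-\beta_j/f_2^2$ times $\gamma$, while the $u$-block reproduces exactly the second-order operator appearing in $L_{ij}$ acting on $\eta_\ell$. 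This shows $\Delta\gamma+\lambda_{ij}(\ell)\gamma=0$, so every product of this type is an eigenfunction with the claimed eigenvalue.

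The second step is completeness: I would argue that every $L^2$ function on $M$ lies in the closed span of such products. Expanding first in spherical harmonics on $S^k$, then in spherical harmonics on $S^l$ (both complete families), reduces any $\gamma$ to a superposition of terms $e_{ij}(u)\xi_i(y)\psi_j(z)$; expanding each coefficient $e_{ij}(u)$ in the eigenbasis of $L_{ij}$ on $T$-periodic functions finishes the argument, so the spectrum of $\Delta$ is the union of the $\Gamma_{ij}$. For the stability operator one repeats the argument verbatim: by Equation \eqref{gprinccurv} the quantity $|A|^2=k\lambda_1^2+l\lambda_{k+1}^2+\lambda_{k+l+1}^2$ is a function of $u$ alone, so adding the zeroth-order term $(n+|A|^2)\zeta$ (with $n=k+l+1$) only modifies the $u$-operator, turning $L_{ij}$ into $S_{ij}$ and leaving the separation intact.

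The main obstacle I anticipate is the completeness step, specifically justifying that $L_{ij}$, regarded as an operator on $T$-periodic functions, is self-adjoint with respect to the natural weight and admits a complete orthogonal system of eigenfunctions. The correct weight is dictated by the Riemannian volume element of $M$ in the parametrization of Lemma \ref{glap}, which factorizes as the volume of $S^k$ times the volume of $S^l$ times $w(u)\,du$ with $w(u)=f^k f_2^l\sqrt{1+(f^\prime)^2}$; a direct check shows that $w^\prime/w=k\frac{f^\prime}{f}+l\frac{f_2^\prime}{f_2}+\frac{f^\prime f^{\prime\prime}}{1+(f^\prime)^2}$, which is exactly the relation making $L_{ij}$ equal to $\frac{1}{w}\big(p\,\eta^\prime\big)^\prime-Q\eta$ with $p=f^k f_2^l/\sqrt{1+(f^\prime)^2}$ and $Q=\alpha_i/f^2+\beta_j/f_2^2$. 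Since $f>0$ and $f_2>0$ throughout, $p$ and $w$ are smooth and positive, so this is a regular periodic Sturm–Liouville problem and the spectral theorem supplies the required basis. Once this is in place, the interchange of the three expansions is routine, and the proof concludes as in Theorem \ref{eigenvalues}.
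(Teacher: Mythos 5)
Your proposal is correct and follows essentially the same route the paper takes: the paper omits the proof of Theorem \ref{geigenvalues}, intending the reader to repeat the separation-of-variables argument given for Theorem \ref{eigenvalues}, with the Fourier modes $g_j(v)$ replaced by spherical harmonics on $S^l$, exactly as you do. Your additional verification that $L_{ij}$ is a regular periodic Sturm--Liouville operator with weight $w=f^k f_2^l\sqrt{1+(f^\prime)^2}$ is a correct and welcome strengthening of the completeness step, which the paper's own argument for Theorem \ref{eigenvalues} leaves implicit.
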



\section{The ODE  and a numerical example}

The following theorem shows the ordinary differential equation that  $f_1$ and $f_2$ must satisfy  to generate immersed minimal hypersurfaces from $S^k\times S^l\times \mathbb{R}$ to $S^n$. Later on, we will study this ODE and provide some numerical embedded compact examples for several  values of $k$ and $l$.

\begin{thm}
Let  $\alpha(t)=(f_1(t),f_2(t))$ be a curve in the interior of the unit disk parametrized by arc-length, with $f_2(t)>0$ and let $\theta(t)$ be a smooth function such that $f_1^\prime(t)=\cos(\theta)$ and $f_2^\prime(t)=\sin(\theta)$. The immersion $\varphi:S^k\times S^l\times \mathbb{R}$ given in \ref{glap} satisfies $nH=0$ with $nH$ given in Equation (\ref{gH}), if and only if, 

\begin{eqnarray}\label{K}
\theta^\prime=K= \frac{\ \left(  (l-nf_2^2 ) \cos (\theta )+f_1 f_2 n \sin (\theta )\right) \left(1-(f_1 \sin (\theta )-f_2 \cos(\theta ))^2\right)}{f_2 \left(1-f_1^2-f_2^2\right)} .
\end{eqnarray}

\end{thm}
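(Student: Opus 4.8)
The plan is to start from the minimality condition $nH=0$, with $nH$ given by Equation \eqref{gH}, and to rewrite every ingredient using the angle function $\theta$ defined by $f_1^\prime=\cos\theta$ and $f_2^\prime=\sin\theta$. The crucial structural observation is that the unknown $\theta^\prime$ enters the equation only through the principal curvature $\kappa_{l+1}$, and it enters \emph{linearly}; hence, once the remaining quantities are expressed in closed form, solving $nH=0$ for $\theta^\prime$ is a single algebraic step rather than an integration.

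First I would substitute $f_1^\prime=\cos\theta$ and $f_2^\prime=\sin\theta$ into the data of Section 4. From $\kappa_1=\dots=\kappa_l=-f_1^\prime/f_2$ this gives $\kappa_1=-\cos\theta/f_2$, and from $g=f_2 f_1^\prime-f_1 f_2^\prime$ (with $f$ and $h$ as in Lemma \ref{fgh}) one gets $g=f_2\cos\theta-f_1\sin\theta$. Differentiating the angle relations yields $f_1^{\prime\prime}=-\theta^\prime\sin\theta$ and $f_2^{\prime\prime}=\theta^\prime\cos\theta$, so
\[
\kappa_{l+1}=f_2^{\prime\prime}f_1^\prime-f_1^{\prime\prime}f_2^\prime=\theta^\prime(\cos^2\theta+\sin^2\theta)=\theta^\prime .
\]
Finally, differentiating $f^2=1-f_1^2-f_2^2$ gives $ff^\prime=-(f_1\cos\theta+f_2\sin\theta)$, hence $(ff^\prime)^2=(f_1\cos\theta+f_2\sin\theta)^2$, while $h^2=1-g^2=1-(f_1\sin\theta-f_2\cos\theta)^2$.

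Clearing denominators in \eqref{gH} by multiplying $nH=0$ through by $h^3$ (legitimate since $h>0$) turns minimality into $h^2(ng+l\kappa_1+\kappa_{l+1})=\kappa_{l+1}(ff^\prime)^2$. Because $\kappa_{l+1}=\theta^\prime$ occurs linearly, I collect the $\theta^\prime$ terms and obtain
\[
\theta^\prime\bigl(h^2-(ff^\prime)^2\bigr)=\frac{l\,h^2\cos\theta}{f_2}-n\,g\,h^2 .
\]
The main (and essentially only) obstacle is simplifying the coefficient $h^2-(ff^\prime)^2$. Expanding the two squares and using $\cos^2\theta+\sin^2\theta=1$, the cross terms cancel and one arrives at the clean identity $h^2-(ff^\prime)^2=1-(f_1^2+f_2^2)=f^2$. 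Dividing through by $f^2=1-f_1^2-f_2^2$, which is strictly positive because $\alpha$ lies in the interior of the unit disk, and then rewriting $ng$ via $g=f_2\cos\theta-f_1\sin\theta$ to combine the numerator into $(l-nf_2^2)\cos\theta+nf_1f_2\sin\theta$ over $f_2$, together with $h^2=1-(f_1\sin\theta-f_2\cos\theta)^2$, produces exactly the expression $K$ of \eqref{K}. Every manipulation used — multiplying by $h^3>0$ and dividing by $f^2>0$ — is reversible, so the computation establishes the stated equivalence in both directions.
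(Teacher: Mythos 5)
Your proposal is correct and follows essentially the same route as the paper: substitute $f_1'=\cos\theta$, $f_2'=\sin\theta$ (so $\kappa_{l+1}=\theta'$), multiply $nH=0$ by $h^3$, and solve the resulting equation, which is linear in $\theta'$. Your explicit isolation of the coefficient identity $h^2-(ff')^2=1-f_1^2-f_2^2=f^2$ is a nice touch that the paper leaves implicit in its ``direct verification,'' but it is the same computation.
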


\begin{proof} A direct verification shows that the expression for $nH h^3$ reduces to 

$$\left(1-\left(f_2 f_1'-f_1 f_2'\right){}^2\right) \left(\frac{f_1' \left(f_2^2 n+f_2 f_2''-l\right)}{f_2}-f_2' \left(f_1 n+f_1''\right)\right)-\left(f_1 f_1'+f_2 f_2'\right){}^2 \left(f_1' f_2''-f_2' f_1''\right)$$

and if we make $f_1'=\cos(\theta)$, $f_2'=\sin(\theta)$,$f_1''=-\theta'\sin(\theta)$, $f_2''=\theta'\cos(\theta)$ then, the expression above transforms into

$$\left(1-\left(f_2 \cos (\theta )-f_1 \sin (\theta )\right){}^2\right) \left(-\frac{l \cos (\theta )}{f_2}-n f_1  \sin (\theta )+f_2 n \cos (\theta )+\theta '\right)-\theta ' \left(f_2 \sin (\theta )+f_1 \cos (\theta )\right){}^2 . $$

Setting the expression above equal to zero and solving for $\theta'$ completes the proof of the theorem.

\end{proof}

In order to find immersed examples we need to find periodic solutions of the ODE,

\begin{eqnarray}
\begin{cases} f_1'&=\cos(\theta)\\
f_2'&=\sin(\theta)\\\label{ODE}
\theta'&= \frac{\ \left(  (l-nf_2^2 ) \cos (\theta )+f_1 f_2 n \sin (\theta )\right) \left(1-(f_1 \sin (\theta )-f_2 \cos(\theta ))^2\right)}{f_2 \left(1-f_1^2-f_2^2\right)} .
\end{cases}
\end{eqnarray}

In \cite{Carl}, Carlotto and Schulz showed the existence of periodic solutions of this system  when $l=k$; this is when $n=2 k+1$.
 
  \begin{lem}\label{ode1}
 The solutions $f_1$ and $f_2$ of the system (\ref{ODE}) with initial conditions $\theta(0)=0,\, f_1(0)=0$ and $f_2(0)=a_0$ are periodic if for some positive number $T$, we have that $f_1(T/2)=0$, $f_2(T/2)=a_0$ and $\theta(T/2)=\pi$.
 \end{lem}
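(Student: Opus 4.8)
The plan is to realize the system \eqref{ODE} as a \emph{reversible} system and deduce periodicity from the two prescribed ``boundary'' conditions by a reflection argument. On the open region $\mathcal{U}=\{(f_1,f_2,\theta): f_2>0,\ f_1^2+f_2^2<1\}$ the right-hand side $K$ of \eqref{K} is smooth, so the initial value problem has a unique solution and any trajectory is determined by a single point it passes through; this uniqueness is the only analytic input needed. Throughout I regard the angle $\theta$ as taking values in $\mathbb{R}/2\pi\mathbb{Z}$, which is legitimate because $K$, $\cos\theta$ and $\sin\theta$ are all $2\pi$-periodic in $\theta$. This normalization is important: on the circle the two values $\theta=0$ and $\theta=\pi$ are exactly the fixed points of $\theta\mapsto-\theta$, and these are precisely the angles occurring at $u=0$ and $u=T/2$.

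The key step is to introduce the involution
$$\tau(f_1,f_2,\theta)=(-f_1,\ f_2,\ -\theta)$$
and to show that it \emph{reverses} time for \eqref{ODE}, i.e. that if $(f_1,f_2,\theta)(u)$ is a solution, then so is $u\mapsto\tau\big((f_1,f_2,\theta)(-u)\big)$. For the first two equations this is immediate from the fact that $\cos$ is even and $\sin$ is odd. For the $\theta$-equation it reduces to the single algebraic identity
$$K(-f_1,f_2,-\theta)=K(f_1,f_2,\theta),$$
which holds because the substitution $(f_1,\theta)\mapsto(-f_1,-\theta)$ fixes $\cos\theta$ and the denominator $f_2(1-f_1^2-f_2^2)$, leaves the first numerator bracket $(l-nf_2^2)\cos\theta+f_1 f_2 n\sin\theta$ invariant (the two sign changes in the last term cancel), and leaves the squared bracket $\big(1-(f_1\sin\theta-f_2\cos\theta)^2\big)$ invariant as well. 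Geometrically, $\tau$ is the reflection of the profile curve across the $f_2$-axis together with a reversal of the direction of travel, which is the natural symmetry suggested by the conditions $f_1(0)=f_1(T/2)=0$. The fixed-point set of $\tau$ on the cylinder is $\{f_1=0,\ \theta\in\{0,\pi\}\}$, so both the initial point $(0,a_0,0)$ and the half-period point $(0,a_0,\pi)$ lie in $\mathrm{Fix}(\tau)$.

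With this in hand the conclusion follows from the standard reflection principle for reversible systems. Writing $\gamma$ for the solution, whenever $\gamma(u_0)\in\mathrm{Fix}(\tau)$ the two curves $s\mapsto\gamma(u_0+s)$ and $s\mapsto\tau\big(\gamma(u_0-s)\big)$ are both solutions of \eqref{ODE} and agree at $s=0$, so by uniqueness $\gamma(u_0+s)=\tau\big(\gamma(u_0-s)\big)$ for all $s$. Applying this at $u_0=0$ gives $\gamma(s)=\tau\big(\gamma(-s)\big)$, and applying it at $u_0=T/2$ gives $\gamma(T/2+s)=\tau\big(\gamma(T/2-s)\big)$. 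Replacing $s$ by $s+T/2$ in the second relation yields $\gamma(T+s)=\tau\big(\gamma(-s)\big)$, and combining with the first relation gives $\gamma(T+s)=\gamma(s)$ for all $s$. Hence $f_1$ and $f_2$ are $T$-periodic (and $\theta$ is $T$-periodic modulo $2\pi$), which is the assertion of the lemma. The only genuinely substantive point, and the one I expect to be the crux, is the recognition and verification of the reversing symmetry $\tau$ together with the cylinder viewpoint that makes $\theta=\pi$ a fixed value; once $\tau$ is in place, the remainder is the routine reversible-system reflection argument.
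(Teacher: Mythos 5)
Your proposal is correct and follows essentially the same route as the paper: the paper's proof also rests on the reversing symmetry $(f_1,f_2,\theta)(t)\mapsto(-f_1(-t),f_2(-t),-\theta(-t))$ together with uniqueness of solutions. You simply spell out more fully the step the paper leaves implicit, namely applying the reflection a second time at $u_0=T/2$ (where $\theta=\pi$ is fixed modulo $2\pi$) and composing the two reflections to obtain $T$-periodicity.
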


 \begin{proof} The lemma follows from the uniqueness of the solution of the system after  checking that the functions $\tilde{f}_1(t)=-f_1(-t)$, $\tilde{f}_2(t)=f_2(-t)$ and $\tilde{\theta}(t)=-\theta(-t)$ must agree with the functions  $f_1(t)$, $f_2(t)$ and $\theta(t)$  respectively, because they also satisfy the ODE with the same initial conditions as the solution $f_1(t),f_2(t),\theta(t)$.

 \end{proof}

\begin{ex} \label{ex1} We can numerically check that if we take $n=5$, $a=a_0=0.14971329$ and $T=2.0293246$, then $|f_1(T/2)|<10^{-8}$,  $|\theta(T/2)-\pi|<10^{-7}$. This example defines a minimal embedding of $S^3\times S^1\times S^1$ in $S^6$.

Figures \ref{tori} and \ref{circle} show the surface of revolution $M_0$ that generates the immersion $M$ in \ref{ex1}.
\begin{figure}[h]
\centerline
{\includegraphics[scale=0.52]{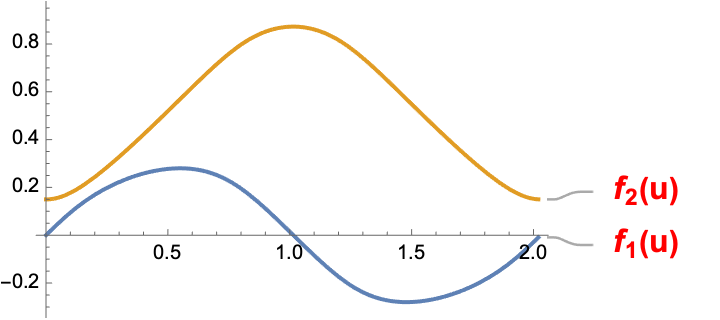} \includegraphics[scale=0.52]{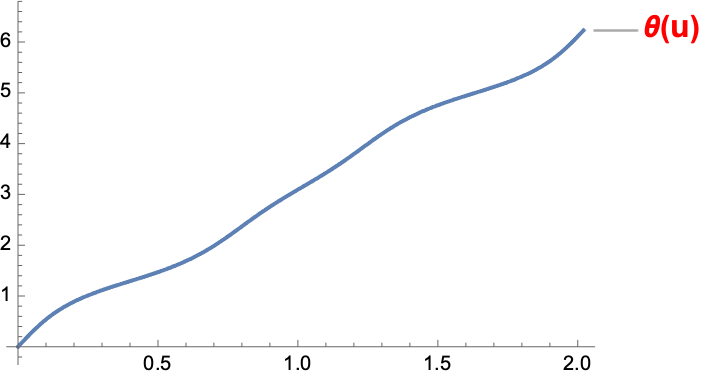}}
\caption{Solution of the system with $k=3$ and $a_0=0.14971331$. }
\label{tori}
\end{figure}

\begin{figure}[h]
\centerline
{\includegraphics[scale=0.62]{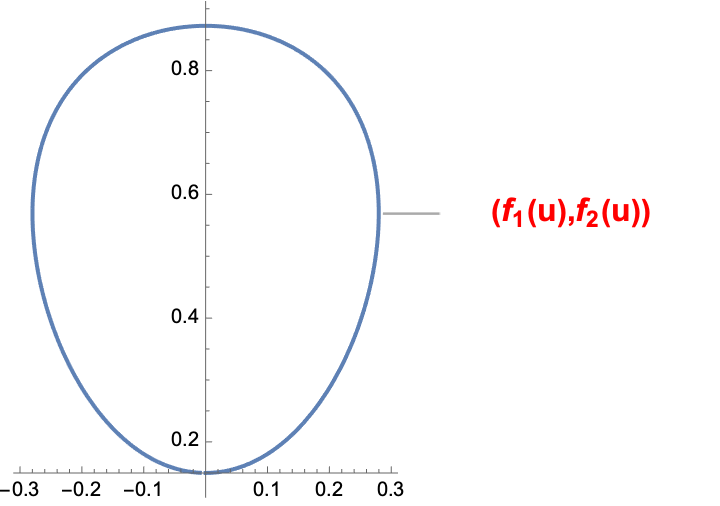} \includegraphics[scale=0.52]{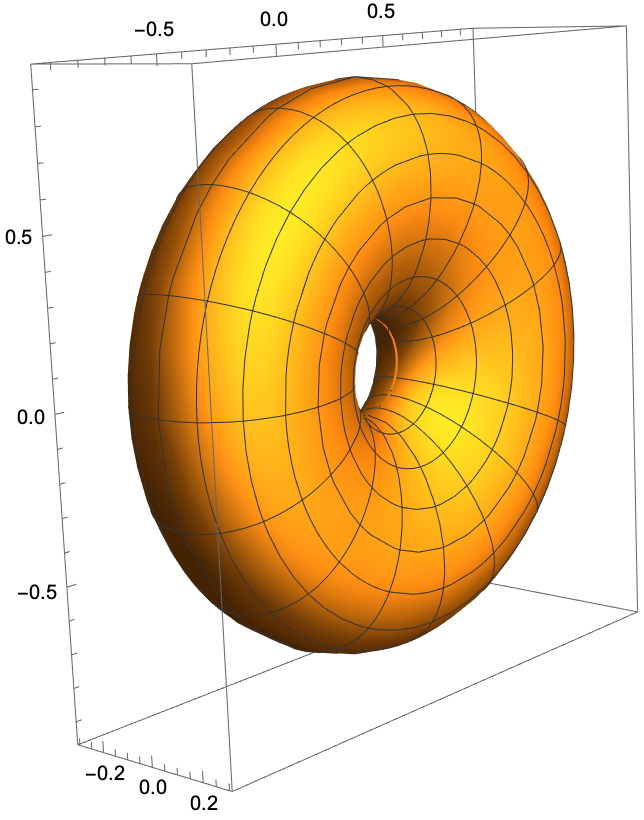}}
\caption{Graph of the surface $M_0$. The torus on the right is the rotation around the $x_1$-axis of the closed curve on the left. }
\label{circle}
\end{figure}
\end{ex}


\section{Numerical computation of the first eigenvalues of the Laplacian and  stability operator.}\label{numerical}

In this section we will compute the first eigenvalues of the Laplace and stability operator for the embedded example provided in Example \ref{ex1}. We will need the following lemma.

\begin{lem}\label{delta} Let us assume that $a(t)$, $b(t)$ and $c(t)$ are $T$-periodic functions with $a(t)>0$. If we define

$$L(u(t))=a(t) u^{\prime\prime}(t)+b(t)u^\prime(t)+c(t),$$

then a real number $\lambda$ satisfies $L(u)+\lambda u=0$ for a non-zero $T$-periodic solution $u$ if and only if

$$ \delta_0(\lambda) =1+z_1(T)z_2^\prime(T)-z_2(T)z_1^\prime(T)-\left(z_1(T)+z_2^\prime(T)\right) \, =\, 0\, ,$$

where  $z_1(t)$ satisfies $L(z_1)+\lambda z_1=0$ with $z_1(0)=1$ and $z_1^\prime(0)=0$ and $z_2(t)$ satisfies $L(z_2)+\lambda z_2=0$ with $z_2(0)=0$ and $z_2^\prime(0)=1$. 
\end{lem}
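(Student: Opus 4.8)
The plan is to recast the question in the language of Floquet theory, turning the existence of a $T$-periodic solution into a fixed-point condition for the monodromy matrix. First I would rewrite the second-order equation $L(u)+\lambda u=0$, that is $a(t)u''+b(t)u'+(c(t)+\lambda)u=0$, as the first-order system $X'=A(t)X$ for $X=(u,u')^{\top}$, where
$$A(t)=\begin{pmatrix} 0 & 1 \\ -(c(t)+\lambda)/a(t) & -b(t)/a(t) \end{pmatrix}.$$
The two solutions $z_1,z_2$ singled out by the initial data $z_1(0)=1,\ z_1'(0)=0$ and $z_2(0)=0,\ z_2'(0)=1$ are exactly the columns of the fundamental matrix normalized so that $\Phi(0)=I$:
$$\Phi(t)=\begin{pmatrix} z_1(t) & z_2(t) \\ z_1'(t) & z_2'(t) \end{pmatrix}.$$
Every solution with initial vector $(p,q)^{\top}$ is then $\Phi(t)(p,q)^{\top}$, so the behaviour of solutions over one period is governed by the monodromy matrix $M:=\Phi(T)$.

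The key step is the equivalence between $T$-periodicity of a solution and the fixed-point condition $M(p,q)^{\top}=(p,q)^{\top}$, and here the $T$-periodicity of the coefficients $a,b,c$ is essential. I would argue that if $u$ solves the equation then so does $t\mapsto u(t+T)$, since shifting by a period leaves $A(t)$ unchanged; this shifted solution has initial vector $(u(T),u'(T))^{\top}$. By uniqueness of solutions to the initial value problem, $u(t+T)=u(t)$ for all $t$ holds precisely when $(u(T),u'(T))^{\top}=(u(0),u'(0))^{\top}$, i.e. when $(p,q)^{\top}$ is fixed by $M$. Consequently a non-zero $T$-periodic solution exists if and only if $M$ has the eigenvalue $1$, that is, $\det(M-I)=0$.

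Finally I would expand the determinant in terms of the entries of $M$,
$$\det(M-I)=(z_1(T)-1)(z_2'(T)-1)-z_2(T)z_1'(T),$$
and observe that this rearranges to $1+z_1(T)z_2'(T)-z_2(T)z_1'(T)-(z_1(T)+z_2'(T))$, which is precisely $\delta_0(\lambda)$. This identifies $\delta_0(\lambda)$ with $\det(M-I)$ and yields the stated criterion. I do not expect a genuine obstacle: the one point demanding care is the periodicity-to-fixed-point equivalence, which relies squarely on the periodicity of the coefficients and on uniqueness for the initial value problem. One may additionally note, via Abel's formula, that $\det M=\exp\left(-\int_0^T \frac{b(s)}{a(s)}\,ds\right)$, but this value plays no role in the criterion itself.
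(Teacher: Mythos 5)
Your proposal is correct and is essentially the same argument as the paper's: the paper writes $u=c_1z_1+c_2z_2$ and imposes $u(T)=u(0)$, $u'(T)=u'(0)$, obtaining the homogeneous $2\times 2$ system whose matrix is exactly your $M-I$, so the vanishing determinant condition is identical. The only difference is cosmetic Floquet-theoretic packaging, plus the fact that you explicitly justify (via periodicity of the coefficients and uniqueness for the IVP) that matching endpoint data forces genuine $T$-periodicity, a step the paper leaves implicit.
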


\begin{proof} We have that any non-zero solution $u$ of the differential equation $L(u)+\lambda u$ can be written as 

$$u(t)=c_1z_1(t)+c_2 z_2(t)$$

for some $(c_1,c_2)\ne(0,0)$. The solution $u(t)$ is periodic if and only if $u(T)=u(0)$ and $u^\prime(T)=u^\prime(0)$. The previous two equations are equivalent to the system 

$$\begin{cases}(z_1(T)-1) \, c_1+z_2(T) \, c_2=&0\\ z_1^\prime(T)\, c_1+(z_2^\prime(T)-1) \, c_2=&0\end{cases} $$

and we clearly have that the existence of a non-zero solution $(c_1,c_2)$ is equivalent to the condition: ``the determinant of the matrix of the 2 by 2 system vanishes''. That is, the existence of a non-zero solution $u(t)$ is equivalent to the condition $(z_1(T)-1)(z_2^\prime(T)-1)-z_2(T)z_1^\prime(T)=0$. Therefore the lemma follows.
\end{proof}

\begin{rem} When $b(t)$ is the zero function, we can check that the function $w=z_1z_2^\prime-z_2z_1^\prime$ satisfies that $w(0)=1$ and
$w^\prime(t)=0$. Therefore $z_1(T)z_2^\prime(T)-z_2(T)z_1^\prime(T)=1$ and the condition $\delta_0(\lambda)=0$ can be replaced with the condition $\delta=z_1(T)+z_2^\prime(T)=2$
\end{rem}

All numerical results in this section were obtained by solving the relevant second-order ODEs using standard numerical integration in Mathematica. The zeros of the discriminant functions were first identified by graphical inspection and then refined by root-finding routines, to an accuracy of $10^{-7}$.

\subsubsection{First Eigenvalues of the Laplacian of the immersion $M$ in Example \ref{ex1}.} 
In this subsection we will compute all the eigenvalues for the Laplace operator that are smaller than 12. We will show that they are $\lambda=0$ with multiplicity 1, $\lambda=5$ with multiplicity 7 , $\lambda=9.596\dots$ with multiplicity 2,  $\lambda=10.073\dots$ with multiplicity 4, $\lambda=10.6583\dots$ with multiplicity 1 and $\lambda=11.815\dots$ with multiplicity 8. 

Following Theorem \ref{eigenvalues} and Lemma \ref{delta} we need to find the zeroes of the functions $\delta_{ij}(\lambda)$ associated with the operators 

$$L_{ij}(\eta)= \frac{1}{1+(f^\prime)^2}\left(\eta^{\prime\prime}+\left(k\frac{f^\prime}{f}+\frac{f_2^\prime}{f_2}-\frac{f^\prime f^{\prime\prime}}{1+(f^\prime)^2}\right)\eta^\prime\right)-\left(\frac{\alpha_i}{f^2}+\frac{j^2}{f_2^2}\right) \eta . $$

In order to find these roots numerically we need to solve a second order equation for every $\lambda$. Since the coefficients of this second order equation depend on the functions $f_1$ and $f_2$ which were found numerically, then, for any $i,j$ we need to solve a system of equations that includes the solutions of the $f_1$ and $f_2$ along with the solution of the second order differential equations. To incorporate these two systems of equations into one, we need to write the coefficients of the second order equation in terms of $f_1,f_2$ and $\theta$. We accomplish this by using the following identities

$$f_1^\prime(u)=\cos(\theta(u)),\quad f_2^\prime(u)=\sin(\theta(u)),\quad f_1^{\prime\prime}(u)=-K\sin(\theta(u))\quad\hbox{and}\quad f_2^{\prime\prime}(u)=K\cos(\theta(u)), \quad \theta^\prime=K$$

where $K$ is given in Equation \eqref{K}. In this case the ``extended'' system can be written in the form

$$f_1^\prime=\cos(\theta),\,  f_2^\prime=\sin(\theta),\,  \theta^\prime=K, \, a(f_1,f_2,\theta) \, z^{\prime\prime}+b(f_1,f_2,\theta)\, z^\prime+c_{ij}(f_1,f_2,\theta,\lambda)\, z=0$$

with initial conditions $f_1(0)=0$, $f_2(0)=a_0=0.14971329$, $\theta(0)=0$ and $z(0)=1$ and $z^\prime(0)=0$ in order to find $z_1$ and; $z(0)=0$ and $z^\prime(0)=1$ in order to find $z_2$.

A direct computation shows that for the Laplace operator, we have that 

$$ a=\frac{f_1^2+f_2^2-1}{-f_1 f_2 \sin (2 \theta )+f_1^2 \sin ^2(\theta )+f_2^2 \cos ^2(\theta)-1}$$

$$ b=-5 f_1 \cos (\theta )-5 f_2 \sin (\theta )+\frac{\sin (\theta)}{f_2}\quad\hbox{and}\quad c_{ij}=\lambda-\frac{\alpha_k}{1-f_1^2-f_2^2}-\frac{j^2}{f_2^2} .$$ 


\begin{rem}[On the simplification of $b$]
An important observation is that if we fix $l=1$, then the expression for $b$ generalizes to every $n$. We have
\[
b=-n(f_1\cos\theta+f_2\sin\theta)+\frac{\sin\theta}{f_2}.
\]

To understand the simplification of the expression for $b$ above, it is convenient to change to the so-called TreadmillSled coordinates introduced by the author in the study of helicoidal surfaces (see for example \cite{PerdTS,P6}). We emphasize that, although the symbol $\xi$ is already used in this paper to denote the Gauss map, in this context we will temporarily denote the TreadmillSled coordinates. 

The new coordinates $(\xi_1,\xi_2)$ of the curve $\alpha=(f_1(t),f_2(t))$ are defined by
\[
\xi_1(t) = f_1(t)\cos(\theta(t)) + f_2(t)\sin(\theta(t)), 
\qquad 
\xi_2(t) = -f_1(t)\sin(\theta(t)) + f_2(t)\cos(\theta(t)),
\]
where $\theta$ is the angle determined by $\alpha'(t)=(\cos\theta,\sin\theta)$. With this change of variables we have  
\[
\theta' = \frac{\big( l \cos(\theta) - n f_2 \xi_2 \big)\,(1 - \xi_2^2)}{f_2 f^2}.
\]

A nice property of TS coordinates is
\[
\xi_1' = 1 + \xi_2 \theta', 
\qquad 
\xi_2' = -\xi_1 \theta', 
\qquad 
\xi_1^2 + \xi_2^2 = f_1^2 + f_2^2.
\]

Since 
\[
f = \sqrt{1 - f_1^2 - f_2^2} = \big(1 - \xi_1^2 - \xi_2^2 \big)^{1/2},
\]
we obtain
\[
f' = -\frac{1}{f}\,\xi_1,\qquad 
f'' = \frac{1}{f^3}\Big(f^2 \theta' - (1 - \xi_2^2)\Big),\qquad 
1 + f'^2 = \frac{f^2}{1 - \xi_2^2}.
\]

With these identities one can isolate the part proportional to $n$ and verify directly that it coincides with the expected term $-n(f_1\cos\theta+f_2\sin\theta)$. The remaining terms, independent of $n$, simplify to $\sin\theta/f_2$. To obtain these simplifications it is necessary to return to the original variables $(f_1,f_2)$, substitute back their relations with $\theta$, and simplify once more. This explains why the apparently complicated coefficient $b$ collapses to the very short expression displayed above.
\end{rem}

Let us study the operator $L_{00}$. Figure \ref{delta00Lap} is made up of 480 points. For the first point we took $\lambda=0$ and then we solved for $z_1$ and $z_2$ and then we computed  $\delta_{00}(0)$. For the second point we took $\lambda=0.025$ and we computed $\delta_{00}(0.025)$. We keep increasing the values of $\lambda$ by $0.025$ until we reached our last point $\lambda=11.975$ with a value for $\delta_{00}$ equal to $1.1755\dots$

\begin{figure}[h]
\centerline
{ \includegraphics[scale=0.52]{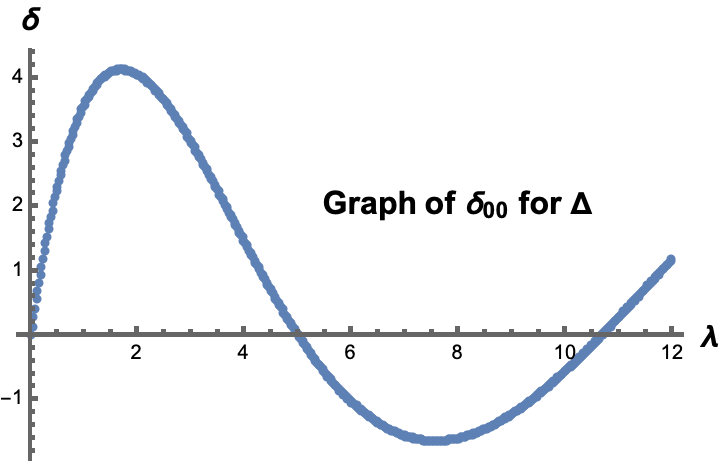}}
\caption{Graph of the function $\delta_{00}$ associated with the operator $L_{00}$ }
\label{delta00Lap}
\end{figure}

The values $\lambda=0$ and  $\lambda=5$ were expected. For $\lambda=0$ an eigenfunction is the constant function $1$ and for $\lambda=5$ an eigenfunction is the function $f_1$. The next zero for the function $\delta_{00}$ can be computed using the intermediate value theorem and it turns out to be $\lambda=10.658388\dots$. The first eigenfunctions of $L_{00}$ are shown in Figure \ref{EigenLambda00}. These functions are not only eigenfunctions for the operator $L_{00}$ but also, when viewed as functions on $M$, they are eigenfunctions of the Laplacian on $M$.

\begin{figure}[h]
\centerline
{ \includegraphics[scale=0.52]{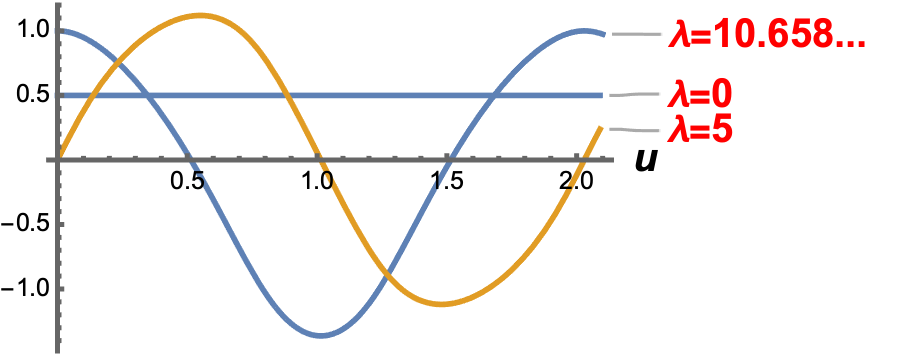}}
\caption{Graph of the first three eigenfunctions of $L_{00}$ }
\label{EigenLambda00}
\end{figure}

In order to study the operator $L_{10}$ we notice that  $\delta_{10}(0)$ is around $ -273.76$ and moreover, we notice that $\delta_{10}$ remains negative for values of $\lambda<5$. Therefore we decided to graph the function $\delta_{10}$ starting at $\lambda=4$.  This time the graph is made up of 350 points. Our first point corresponds to $\lambda=4$, the second one with $\lambda=4.025$ and so on. Similar considerations were made for the functions $\delta_{01}$ and $\delta_{11}$.  Figure \ref{deltasLap} shows these three graphs.

\begin{figure}[h]
\centerline
{ \includegraphics[scale=0.62]{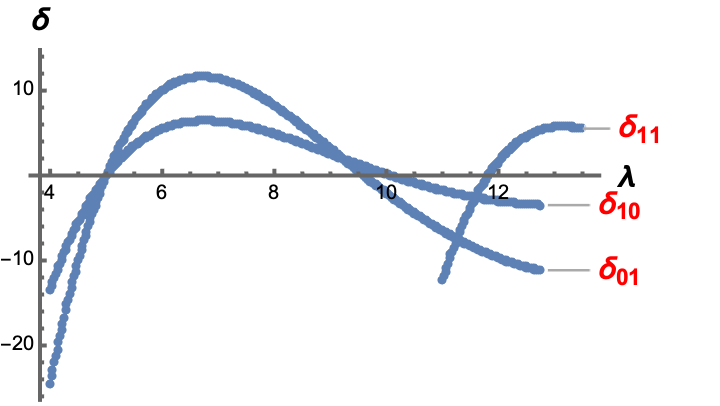}}
\caption{Graph of the functions $\delta_{10}$ associated with the operator $L_{10}$, $\delta_{01}$ associated with the operator $L_{01}$ and $\delta_{11}$ associated with the operator $L_{11}$. }
\label{deltasLap}
\end{figure}

The eigenfunction for $\lambda=5$ of the operator $L_{10}$ was expected and we can directly check that the positive function $f=\sqrt{1-f_1^2-f_2^2}$ is an eigenfunction of $L_{10}$ and the following four functions on $M$, $fy_1,\dots , fy_4$, where $y=(y_1,y_2,y_3,y_4)$ are the coordinates of $S^3\subset \mathbb{R}^4$, are four linearly independent eigenfunctions for $\Delta$ associated with $\lambda=5$. The next zero of the function $\delta_{10}$ turns out to be $\lambda=10.073635149\dots$. For this eigenvalue, an eigenfunction of the operator $L_{10}$ is shown in Figure \ref{EigenLambda10}. In this case the product of this eigenfunction with the functions $y_i$ provide linearly independent eigenfunctions for  the Laplacian on $M$.

\begin{figure}[h]
\centerline
{ \includegraphics[scale=0.52]{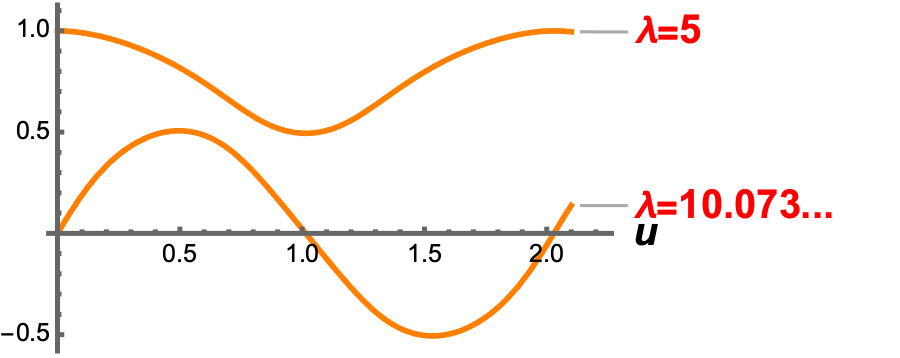}}
\caption{Graph of the first two eigenfunctions of $L_{10}$  }
\label{EigenLambda10}
\end{figure}

Let us review now the operator $L_{01}$. The first eigenvalue is $5$. A direct computation shows that $f_2$ is an eigenfunction associated with $\lambda=5$ and, we have that $f_2(u)\sin(v)$ and $f_2(u)\cos(v)$ are eigenfunctions of the Laplacian on $M$. The next zero for $\delta_{01}$ is $9.596...$ and an eigenfunction associated with $\lambda=9.5961595...$ is shown in Figure \ref{EigenLambda01}. If we call this eigenfunction $f_{9596}$ then, we have that the functions $f_{9596}(u)\cos(v)$ and $f_{9596}(u)\sin(v)$ are two linearly independent eigenfunctions of the Laplacian on $M$.

\begin{figure}[h]
\centerline
{ \includegraphics[scale=0.52]{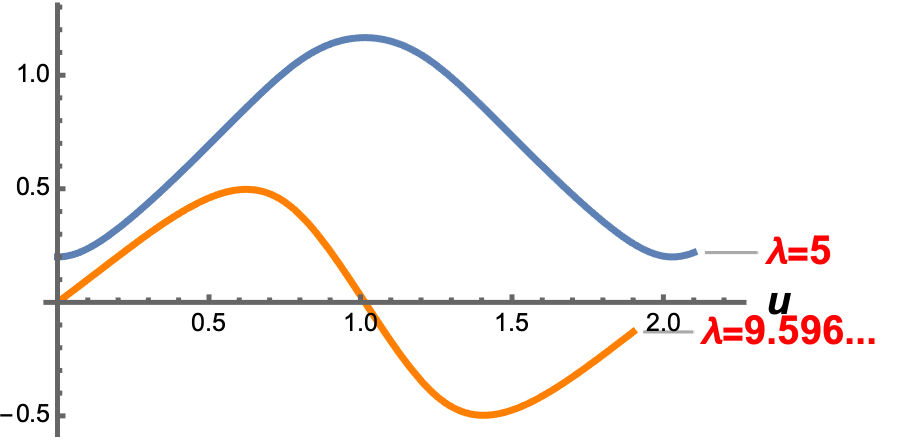}}
\caption{Graph of the first two eigenfunctions of $L_{01}$  }
\label{EigenLambda01}
\end{figure}

Let us review now the operator $L_{11}$. The first zero for $\delta_{11}$ is $11.815175...$ and an eigenfunction associated with $\lambda=11.815175...$ is shown in Figure \ref{EigenLambda11}. If we call this eigenfunction $f_{1181}$ then, we have that the functions $f_{1181}(u)\cos(v)y_i$ and $f_{1181}(u)\sin(v)y_i$ are eight linearly independent eigenfunctions of the Laplacian on $M$.

\begin{figure}[h]
\centerline
{ \includegraphics[scale=0.52]{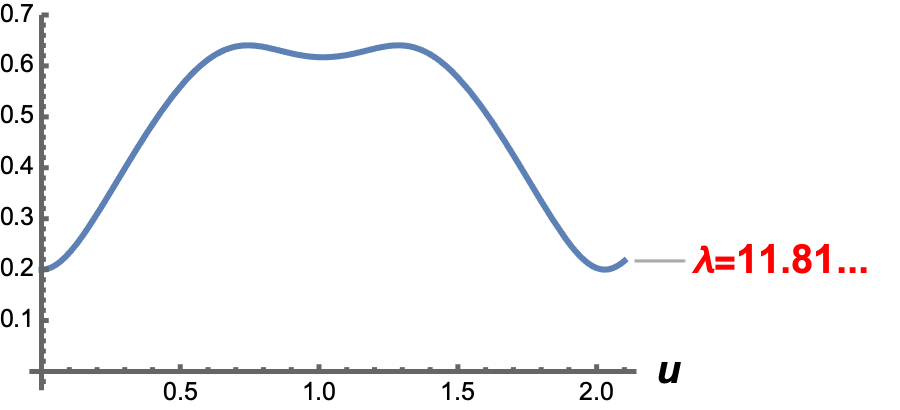}}
\caption{Graph of the first  eigenfunctions of $L_{11}$  }
\label{EigenLambda11}
\end{figure}

%
%
%


\subsubsection{Non-positive eigenvalues of the stability Operator of the immersion $M$ in Example \ref{ex1}.} 

In this section we will compute all non-positive eigenvalues of the stability operator $J(\eta)=\Delta\eta+(k+2)\eta+|A|^2\eta$. We will show that they are: $\lambda=-32.232\dots$ with multiplicity 1, $\lambda=-29.0007\dots$ with multiplicity 4, $\lambda=-23.630\dots$ with multiplicity 9, $\lambda=-16.133\dots$ with multiplicity 16, $\lambda=-14.662\dots$ with multiplicity 1, $\lambda=-13.476\dots$ with multiplicity 2, $\lambda=-8.255\dots$ with multiplicity 2,  $\lambda=-6.516\dots$ with multiplicity 25, $\lambda=-5$ with multiplicity 15, $\lambda=-0.4047\dots$ with multiplicity 2 and $\lambda=0$ with multiplicity 14. Therefore, the stability index, which is the number of negative eigenvalues counted with multiplicity is 77 and the nullity, which is the multiplicity of the eigenvalue $\lambda=0$ is 14.

Following Theorem \ref{eigenvalues} and Lemma \ref{delta} we need to find the zeroes of the functions $\bar{\delta}_{ij}(\lambda)$ associated with the operators 

$$S_{ij}(\eta)= \frac{1}{1+(f^\prime)^2}\left(\eta^{\prime\prime}+\left(3\frac{f^\prime}{f}+\frac{f_2^\prime}{f_2}-\frac{f^\prime f^{\prime\prime}}{1+(f^\prime)^2}\right)\eta^\prime\right)-\left(\frac{\alpha_i}{f^2}+\frac{j^2}{f_2^2}-n-\left(3\lambda_0^2+\lambda_1^2+\lambda_2^2\right)\right) \eta .$$

In order to find these roots numerically we need to solve a second order equation for some large amount of values for  $\lambda$ in a given interval. As in the case for the Laplacian, since the coefficients of this second order equation depend on the functions $f_1$ and $f_2$ which were found numerically, then, for any $i,j$ we need to solve a system that includes the solution of the $f_1$ and $f_2$ along with the solution of the second order equations. To incorporate these differential equations into one single system,  we need to write the coefficients of the second order equation in terms of $f_1,f_2$ and $\theta$. 
This time the extended system can be written as 

$$f_1^\prime=\cos(\theta),\,  f_2^\prime=\sin(\theta),\,  \theta^\prime=K, \, a(f_1,f_2,\theta) \, z^{\prime\prime}+b(f_1,f_2,\theta)\, z^\prime+\bar{c}_{ij}(f_1,f_2,\theta,\lambda)\, z=0$$

with initial conditions $f_1(0)=0$, $f_2(0)=a_0=0.149713296$, $\theta(0)=0$ and $z(0)=1$ and $z^\prime(0)=0$ in order to find $z_1$ and, $z(0)=0$ and $z^\prime(0)=1$ in order to find $z_2$. The functions $a$ and $b$ are those used to study the operators $\Gamma_{ij}$ and $\bar{c}_{ij}=c_{ij}+5+|A|^2$ where  $|A|^2$ is equal to 

$$ \frac{-20 f_1^2 f_2^2 \sin ^2(\theta )+5 f_1 f_2 \left(4 f_2^2-1\right) \sin (2 \theta )-2 \left(10 f_2^4-5 f_2^2+1\right) \cos ^2(\theta )}{f_2^2 (-f_1 \sin (\theta )+f_2 \cos (\theta )-1) (-f_1 \sin (\theta )+f_2 \cos (\theta )+1)} .$$

The graphs of the function $\bar{\delta}_{00}$,  $\bar{\delta}_{10}$, $\bar{\delta}_{01}$, $\bar{\delta}_{11}$, $\bar{\delta}_{20}$,  $\bar{\delta}_{30}$, $\bar{\delta}_{40}$, $\bar{\delta}_{02}$  and $\bar{\delta}_{03}$ are show in Figures \ref{deltabar1}, \ref{deltabar2} and \ref{deltabar3}. Using the intermediate value theorem we can check that all the zeroes of the functions $\bar{\delta}_{04}$, $\bar{\delta}_{50}$, $\bar{\delta}_{12}$ and $\bar{\delta}_{21}$ are positive. Using that $S_{ij}\leq S_{i'j'}$ whenever $i\leq i'$ and $j\leq j'$, and applying the Rayleigh principle, it follows that the first eigenvalue of $S_{i'j'}$ is always greater than or equal to that of $S_{ij}$. Hence it suffices to analyze only the finite list of discriminant graphs displayed above.

\begin{figure}[h]
\centerline
{ \includegraphics[scale=0.33]{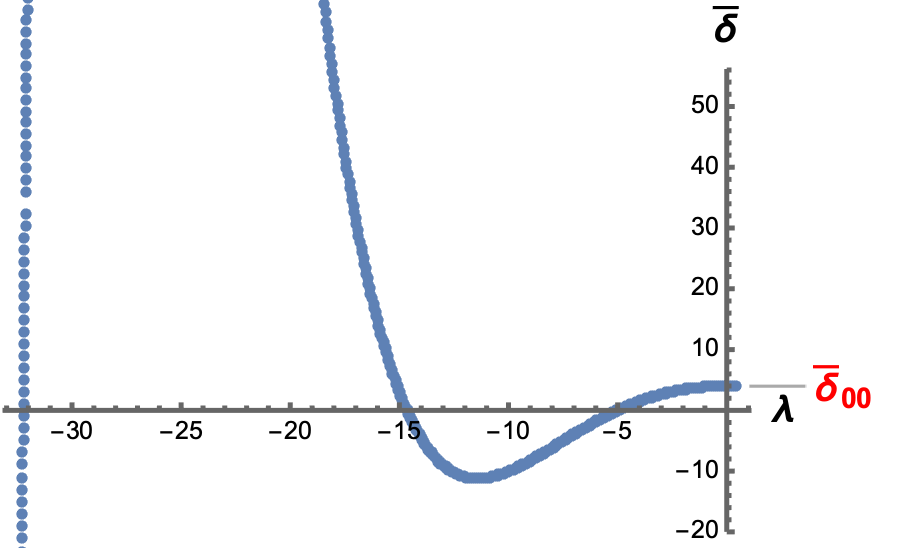}\hskip.1cm \includegraphics[scale=0.33]{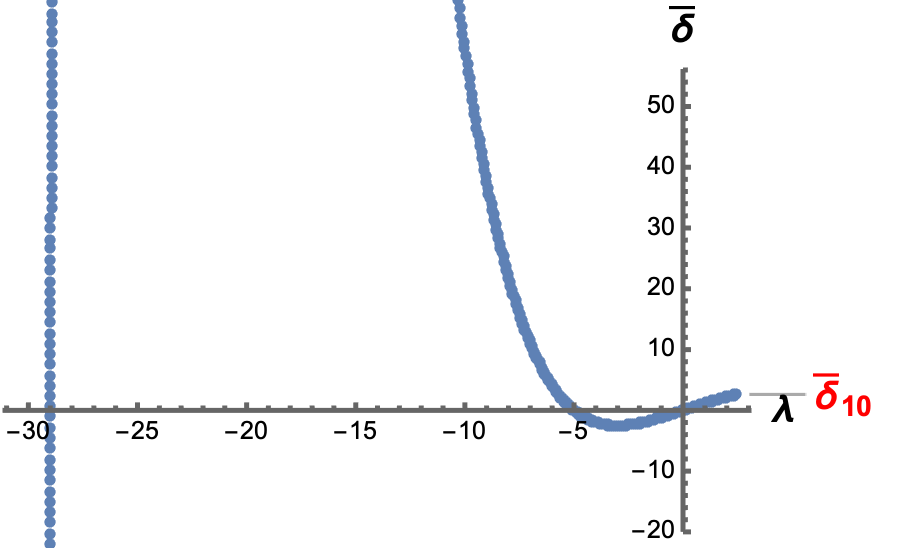}\hskip.1cm \includegraphics[scale=0.33]{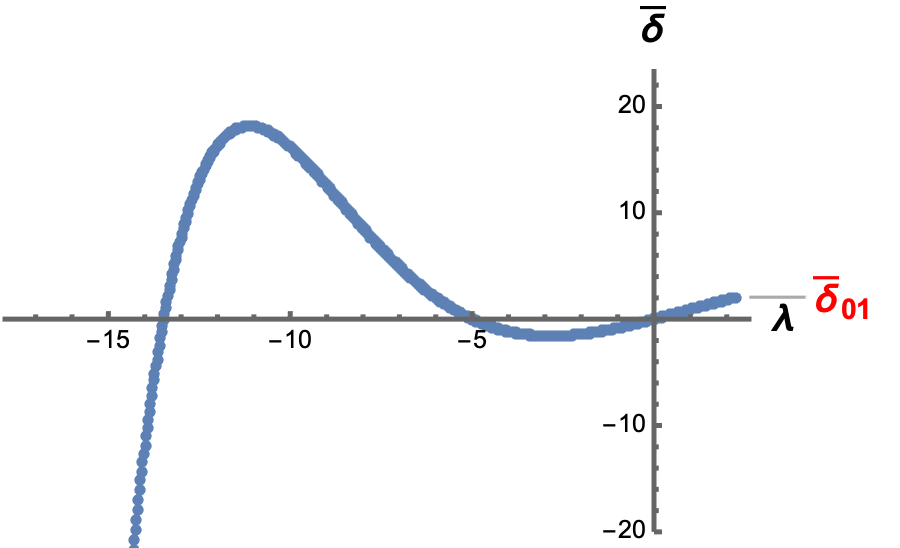}}
\caption{Graphs of the function $\bar{\delta}_{00}$, $\bar{\delta}_{10}$, $\bar{\delta}_{01}$,  associated with the operators $S_{00}$, $S_{10}$,  $S_{01}$ respectively.}
\label{deltabar1}
\end{figure}

\begin{figure}[h]
\centerline
{ \includegraphics[scale=0.33]{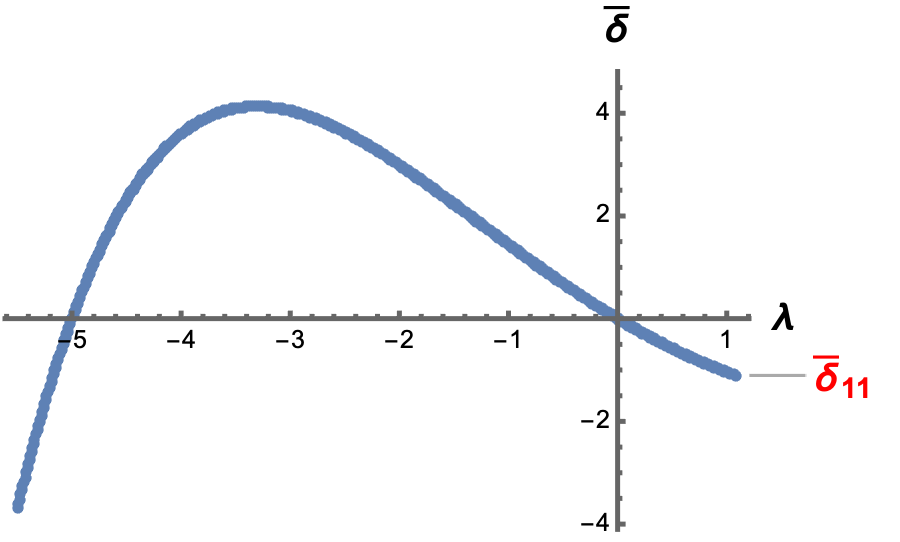}\hskip.1cm \includegraphics[scale=0.33]{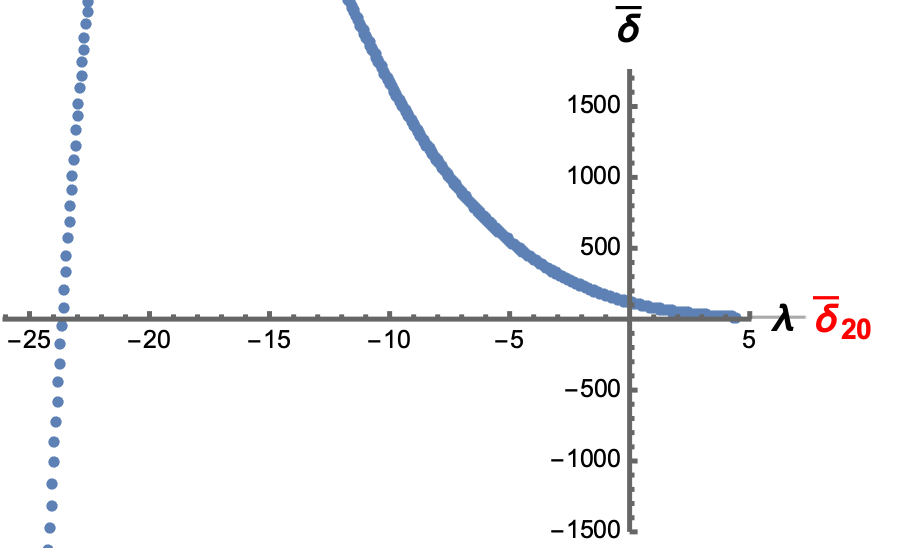}\hskip.1cm \includegraphics[scale=0.33]{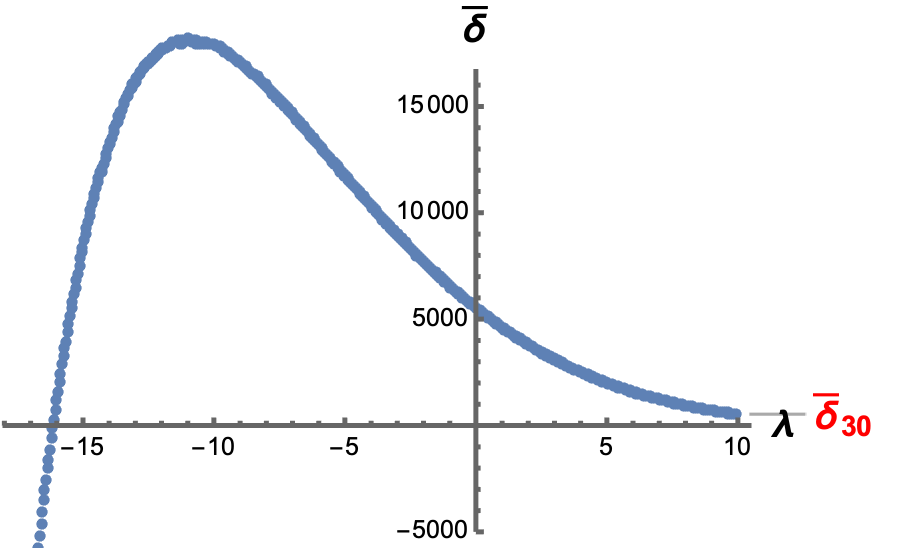}}
\caption{Graphs of the function $\bar{\delta}_{11}$, $\bar{\delta}_{20}$,  $\bar{\delta}_{30}$ associated with the operators $S_{11}$, $S_{20}$,  $S_{30}$ respectively.}
\label{deltabar2}
\end{figure}

\begin{figure}[h]
\centerline
{ \includegraphics[scale=0.33]{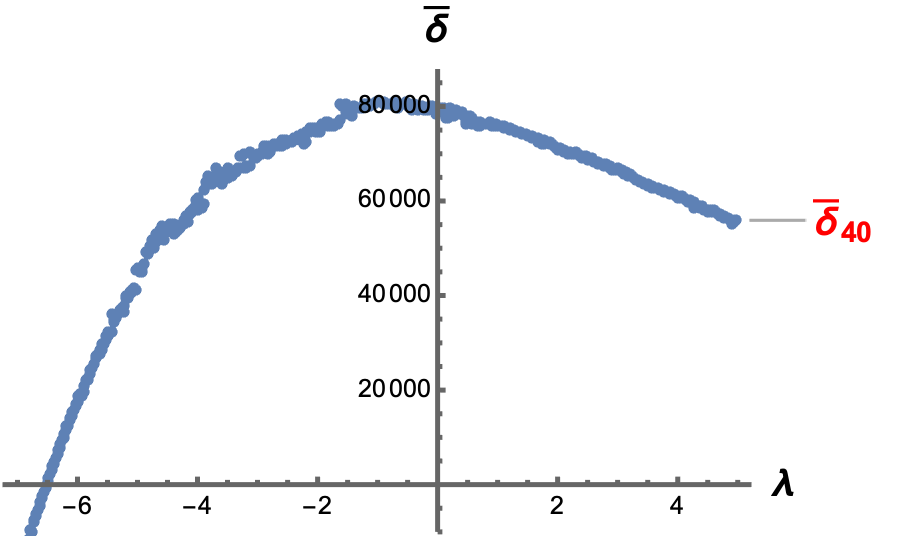}\hskip.1cm \includegraphics[scale=0.33]{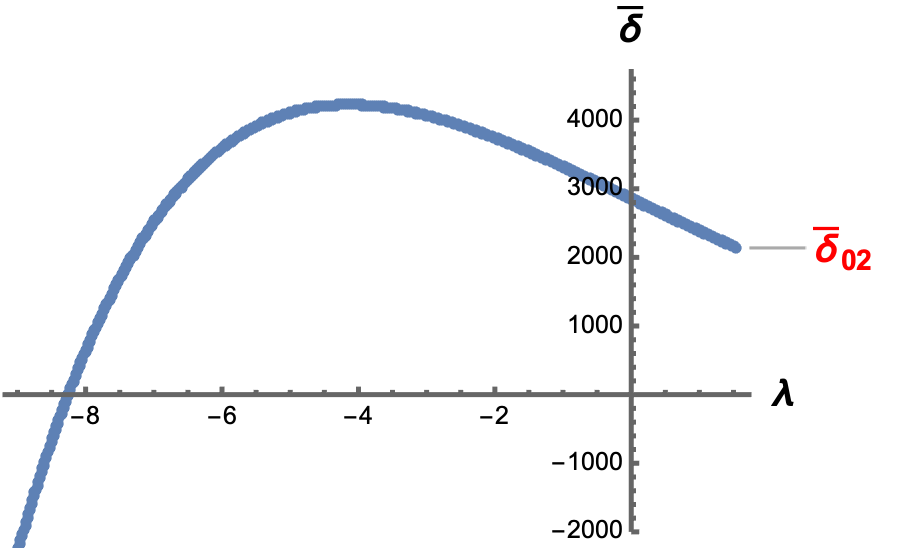}\hskip.1cm \includegraphics[scale=0.33]{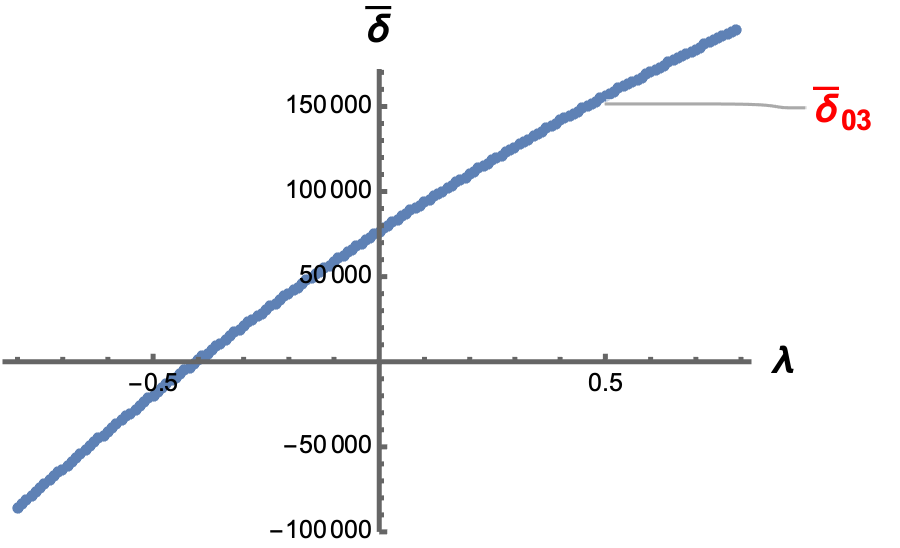}}
\caption{Graphs of the function $\bar{\delta}_{40}$, $\bar{\delta}_{02}$, $\bar{\delta}_{03}$ associated with the operators $S_{40}$, $S_{02}$,  $S_{03}$ respectively.}
\label{deltabar3}
\end{figure}

Before continuing counting the zeroes for the stability operator, let us denote by $\eta_{1,1}(y), \dots \eta_{1,4}(y)$ four linearly independent eigenfunctions of the Laplacian on $S^3$ associated with $\lambda=3$, $\eta_{2,1}(y), \dots \eta_{2,9}(y)$ nine linearly independent eigenfunctions of the Laplacian on $S^3$ associated with $\lambda=8$, $\eta_{3,1}(y), \dots \eta_{3,16}(y)$ sixteen linearly independent eigenfunctions of the Laplacian on $S^3$ associated with $\lambda=15$, $\eta_{4,1}(y), \dots \eta_{4,25}(y)$ twenty five  linearly independent eigenfunctions of the Laplacian on $S^3$ associated with $\lambda=24$.    

 As in the previous case, we can compute the zeros of the functions $\bar{\delta}_{ij}$ using the intermediate value theorem. 
 
\begin{rem}
For the hypersurface $M \subset S^6$ considered above, the Jacobi operator has the following eigenvalues (approximate) with the indicated multiplicities:
\[
\begin{array}{c|c}
\text{Eigenvalue} & \text{Multiplicity} \\
\hline
-32.232\dots & 1 \\
-29.0007\dots & 4 \\
-23.6309\dots & 9 \\
-16.133\dots & 16 \\
-14.662\dots & 1 \\
-13.476\dots & 2 \\
-8.255\dots & 2 \\
-6.516\dots & 25 \\
-5 & 15 \\
-0.4047\dots & 2 \\
0 & 14
\end{array}
\]
\end{rem}

\medskip
\noindent\textit{Eigenvalues of $S_{00}$.}  
The nonpositive eigenvalues are $\lambda=-32.232\dots, -14.662\dots, -5$.  
If we label the eigenfunctions of these eigenvalues as $f_{00,n323}, f_{00,n146}, f_{00,n5}$, then these functions, when viewed on $M$, are eigenfunctions of the stability operator.  
Hence $\lambda=-32.232\dots$ and $\lambda=-14.662\dots$ each have multiplicity one, and we obtain one function in the eigenspace of $\lambda=-5$.

\medskip
\noindent\textit{Eigenvalues of $S_{10}$.}  
The nonpositive eigenvalues are $\lambda=-29.0007\dots, -5, 0$.  
If we denote the corresponding eigenfunctions by $f_{10,n29}, f_{10,n5}, f_{10,0}$, then the functions $f_{10,n29}(u)\eta_{1i}(y)$, $f_{10,n5}(u)\eta_{1i}(y)$, and $f_{10,0}(u)\eta_{1i}(y)$ are eigenfunctions of the stability operator.  
Thus $\lambda=-29.0007\dots$ has multiplicity $4$, while $\lambda=-5$ and $\lambda=0$ each give rise to $4$ eigenfunctions.

\medskip
\noindent\textit{Eigenvalues of $S_{01}$.}  
The nonpositive eigenvalues are $\lambda=-13.476\dots, -5, 0$.  
If we denote the corresponding eigenfunctions by $f_{01,n13}, f_{01,n5}, f_{01,0}$, then each multiplied by $\cos(v)$ or $\sin(v)$ gives an eigenfunction of the stability operator.  
Hence $\lambda=-13.476\dots$ has multiplicity $2$, while $\lambda=-5$ and $\lambda=0$ each give rise to two eigenfunctions.

\medskip
\noindent\textit{Eigenvalues of $S_{11}$.}  
The nonpositive eigenvalues are $\lambda=-5, 0$.  
If we label the eigenfunctions as $f_{11,n5}, f_{11,0}$, then multiplying by $\cos(v)\eta_{1i}$ or $\sin(v)\eta_{1i}$ produces eigenfunctions of the stability operator.  
Thus $\lambda=-5$ and $\lambda=0$ each give rise to $8$ eigenfunctions.

\medskip
\noindent\textit{Eigenvalues of $S_{20}$, $S_{30}$ and $S_{40}$.}  
The only nonpositive eigenvalues are $\lambda=-23.6309\dots, -16.133\dots, -6.516\dots$, corresponding to $S_{20}$, $S_{30}$ and $S_{40}$ respectively. Labeling the eigenfunctions as $f_{20,n23}, f_{30,n16}, f_{40,n6}$, and multiplying by $\eta_{2i}, \eta_{3i}, \eta_{4i}$, we obtain eigenfunctions of the stability operator.  
The corresponding multiplicities are $9, 16, 25$.

\medskip
\noindent\textit{Eigenvalues of $S_{02}$ and $S_{03}$.}  
The only nonpositive eigenvalues are $\lambda=-8.255\dots$ and $\lambda=-0.4047\dots$, respectively.  
Labeling the eigenfunctions as $f_{02,n8}$ and $f_{03}$, each multiplied by $\cos(2v), \sin(2v)$ (resp. $\cos(3v), \sin(3v)$) gives eigenfunctions of the stability operator.  
Thus each has multiplicity $2$.

In total, the multiplicity of the eigenvalue $\lambda=-5$ is $1+2+4+8=15$, and the multiplicity of the eigenvalue $\lambda=0$ is $4+2+8=14$. The multiplicity for $\lambda=-5$ was unexpected: the expected value was $7$, since the coordinate functions of the Gauss map form a $7$-dimensional space (it is relatively easy to prove that these $7$ functions are linearly dependent only for a totally umbilical sphere $S^5\subset S^6$). On the other hand, the multiplicity for $\lambda=0$ was expected, since $14$ is the dimension of the space of Jacobi fields coming from isometries of the ambient space. These reduce to functions of the form $\varphi B\xi$, where $B$ is a skew-symmetric $7\times 7$ matrix, $\varphi$ is the immersion, and $\xi$ is the Gauss map.

Example \ref{n4to50} shows that there exist (at least numerically) embedded examples of minimal hypersurfaces of $S^{n+1}$  for values of $n$ between 4 and 50.

We can also find embedded numerical examples with $l>1$. 

\begin{ex} \label{ex2} Let us assume that $n=5$, $k=2$ and $l=2$  and let us consider the differential equation induced by the equation $H=0$, where $H$ is given by Equation (\ref{gH}). If we take $f_1^\prime=\cos(\theta)$ and $f_2^\prime=\sin(\theta)$ then the solution coming from the initial conditions $f_2(0)=a_0=0.3309805$, $f_1(0)=0$ and $\theta(0)=0$ satisfies that if  $T=1.8733685$, then $|f_1(T/2)|<5*10^{-7}$  and  $|\theta(T/2)-\pi|<10^{-7}$. Therefore, we (numerically) have a minimal immersion of $S^2\times S^2\times S^1$ in $S^6$ which is embedded. A non-numerical proof of the existence of this example is given by Carlotto and Schulz in \cite{Carl}; they show the existence of minimal embedded hypersurfaces from $S^{k-1}\times S^{k-1}\times S^1$ into $S^{2k}$. In  the recent paper \cite{Perd2025}, the author shows that the stability index of the Carlotto-Schulz minimal hypersurfaces is at least $k^2+4k+3$.  For the example considered here, the bound gives that the stability index is at least $24$, while numerically this index is shown to be $45$. Moreover, the nullity in this case is $15$.

Figure \ref{gtori} shows the graphs of the functions $f_1$, $f_2$ and $\theta$.
\begin{figure}[h]
\centerline
{\includegraphics[scale=0.52]{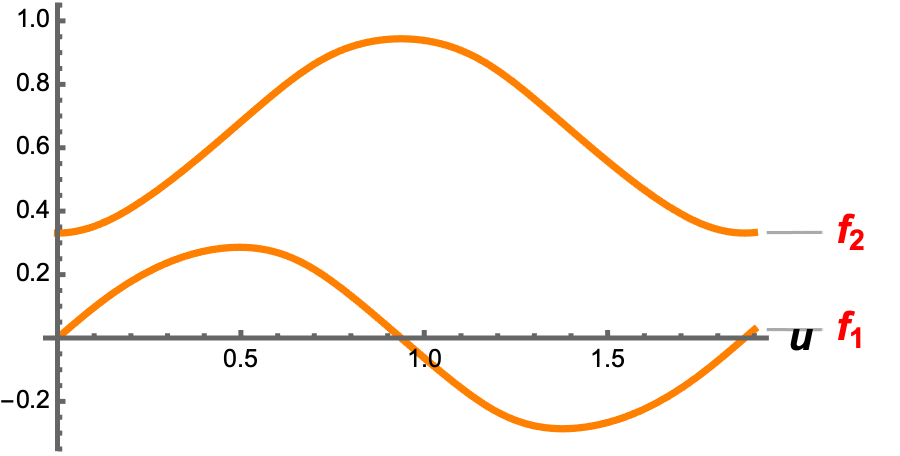} \hskip.3cm  \includegraphics[scale=0.52]{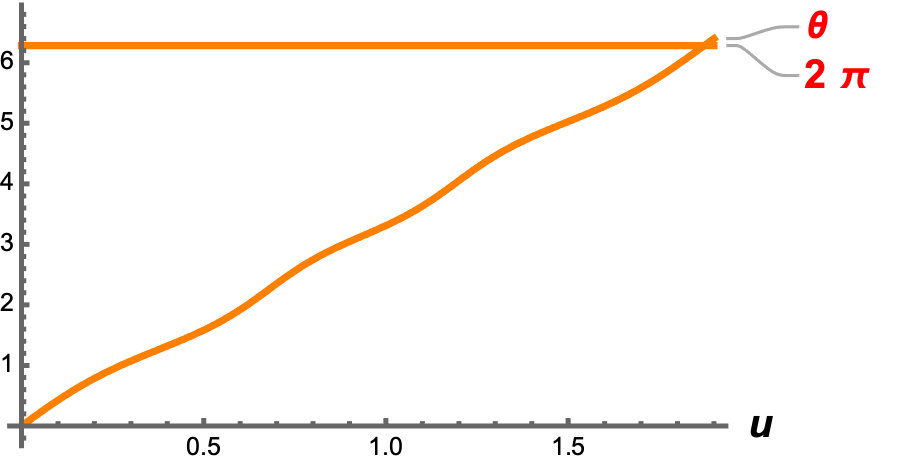}}
\caption{Graphs of the periodic solutions $f_1$, $f_2$ and the function $\theta$, when  $k=l=2$ and $a_0=0.3309805$. }
\label{gtori}
\end{figure}

\end{ex}

\begin{ex} \label{n4to50}
Let us consider immersions of the form described in Equation (\ref{exrev}) with $n$ given by the first column of the table below. Assume that  $f_1$, $f_2$ and $\theta$  satisfy the ODE defined in Lemma \ref{ode1}, with initial conditions $\theta(0)=0$, $f_1(0)=0$, and  $f_2(0)=a_0$ with $a_0$ given by the second column of the table below. We have that the values of  $T$ given by the third column of the table below satisfy $|f_1(T/2)|<10^{-6}$ and  $|\theta(T/2)-\pi|<10^{-6}$.  Therefore this list of numbers produces numerical examples of  embedded minimal hypersurfaces $M\subset S^{n+1}$ for $n=4,5\dots,50$.
 
\begin{table}[h!]
\centering\small
\setlength{\tabcolsep}{5pt}
\renewcommand{\arraystretch}{1.05}
\begin{tabular*}{\textwidth}{@{\extracolsep{\fill}} rcc rcc rcc @{}}
\hline
$n$ & $a_0$ & $T$ & $n$ & $a_0$ & $T$ & $n$ & $a_0$ & $T$ \\
\hline
4  & 0.16854  & 2.17363 & 5  & 0.149713 & 2.02932 & 6  & 0.135385 & 1.90413 \\
7  & 0.124316 & 1.79709 & 8  & 0.115504 & 1.70510 & 9  & 0.108296 & 1.62530 \\
10 & 0.102268 & 1.55538 & 11 & 0.097135 & 1.49355 & 12 & 0.0926974 & 1.43840 \\
13 & 0.0888125& 1.38884 & 14 & 0.0853753 & 1.34401 & 15 & 0.0823064 & 1.30320 \\
16 & 0.0795448& 1.26587 & 17 & 0.0770425 & 1.23154 & 18 & 0.0747616 & 1.19984 \\
19 & 0.0726714& 1.17046 & 20 & 0.0707467 & 1.14312 & 21 & 0.0689668 & 1.11760 \\
22 & 0.0673145& 1.09371 & 23 & 0.0657754 & 1.07128 & 24 & 0.0643369 & 1.05017 \\
25 & 0.0629888& 1.03026 & 26 & 0.0617218 & 1.01143 & 27 & 0.0605282 & 0.993601 \\
28 & 0.0594012& 0.976678 & 29 & 0.0583348 & 0.960589 & 30 & 0.0573239 & 0.945268 \\
31 & 0.0563636& 0.930655 & 32 & 0.0554500 & 0.916699 & 33 & 0.0545793 & 0.903351 \\
34 & 0.0537484& 0.890568 & 35 & 0.0529543 & 0.878313 & 36 & 0.0521943 & 0.866549 \\
37 & 0.0514662& 0.855244 & 38 & 0.0507676 & 0.844371 & 39 & 0.0500968 & 0.833901 \\
40 & 0.0494518& 0.823810 & 41 & 0.0488311 & 0.814077 & 42 & 0.0482332 & 0.804681 \\
43 & 0.0476567& 0.795602 & 44 & 0.0471004 & 0.786823 & 45 & 0.0465631 & 0.778329 \\
46 & 0.0460438& 0.770103 & 47 & 0.0455414 & 0.762133 & 48 & 0.0450552 & 0.754405 \\
49 & 0.0445842& 0.746907 & 50 & 0.0441276 & 0.739628 &    &          &        \\
\hline
\end{tabular*}
\caption{Values of $(a_0,T)$ for embedded examples with $l=1$, $4\leq n\leq 50$.}
\end{table}

\end{ex} 

\begin{rem} 
Numerical evidence shows that for every $k$ and $l$, there is at least one embedded example in $S^{k+l+2}$. In \cite{PerdNE1} the author numerically computes the volume of  these embeddings in $S^{n+1}$ for $n=3,\dots,13$.
\end{rem}


\bmhead{Acknowledgements}

The author would like to thank the referee for a careful reading of the manuscript and for constructive comments and suggestions that helped to improve both the exposition and the structure of the paper.

\bibliography{sn-bibliography}

\end{document}